\documentclass[review,onefignum,onetabnum]{siamart171218}



\usepackage{lipsum}
\usepackage{mathrsfs}
\usepackage{amsfonts}
\usepackage{graphicx}
\usepackage{epstopdf}
\usepackage{algorithmic}
\usepackage{amssymb}
\usepackage{amsmath}


\ifpdf
  \DeclareGraphicsExtensions{.eps,.pdf,.png,.jpg}
\else
  \DeclareGraphicsExtensions{.eps}
\fi


\newsiamremark{hypothesis}{Hypothesis}
\crefname{hypothesis}{Hypothesis}{Hypotheses}
\newsiamthm{claim}{Claim}

\nolinenumbers

\headers{Gegenbauer Polynomials}{T. M. Dunster}


\title{Uniform asymptotic expansions for Gegenbauer polynomials and related functions via differential equations having a simple pole}



\author{T. M. Dunster\thanks{Department of Mathematics and Statistics, San Diego State University, 5500 Campanile Drive, San Diego, CA 92182-7720, USA. 
  (\email{mdunster@sdsu.edu}, \url{https://tmdunster.sdsu.edu}).}}

\usepackage{amsopn}

\makeatletter
\newcommand*{\addFileDependency}[1]{
  \typeout{(#1)}
  \@addtofilelist{#1}
  \IfFileExists{#1}{}{\typeout{No file #1.}}
}
\makeatother


\nolinenumbers

\ifpdf
\hypersetup{
  pdftitle={Uniform asymptotic expansions for Gegenbauer polynomials and related functions via differential equations having a simple pole},
  pdfauthor={T. M. Dunster}
}

\begin{document}

\maketitle

\begin{abstract}
  Asymptotic expansions are derived for Gegenbauer (ultraspherical) polynomials for large order $n$ that are uniformly valid for unbounded complex values of the argument $z$, including the real interval $0 \leq z \leq 1$ in which the zeros in the right half plane are located: symmetry extends the results to the left half plane. The approximations are derived from the differential equation satisfied by these polynomials, and other independent solutions are also considered. For large $n$ this equation is characterized by having a simple pole, and expansions valid at this singularity involve Bessel functions and slowly varying coefficient functions. The expansions for these functions are simpler than previous approximations, in particular being computable to a high degree of accuracy. Simple explicit error bounds are derived which only involve elementary functions, and thereby provide a simplification of previous expansions and error bounds associated with differential equations having a large parameter and simple pole.

\end{abstract}

\begin{keywords}
  {Gegenbauer polynomials, Ultraspherical polynomials, Chebyshev polynomials, WKB methods, Simple poles, Asymptotic expansions}
\end{keywords}

\begin{AMS}
  33C45, 34E20, 34E05, 34M30
\end{AMS}

\section{Introduction}
\label{sec1}

From \cite[Eq. (4.7.31)]{Szego:1975:OP} Gegenbauer polynomials, also known as ultraspherical polynomials, have the explicit representation
\begin{equation}
\label{eq1.1}
C^{(\lambda)}_{n}(z)=
\sum_{k=0}^{\left \lfloor n/2\right \rfloor}(-1)^{k}
\frac{\Gamma (n-k+\lambda )}
{\Gamma (\lambda )k!(n-2k)!}(2z)^{n-2k}
\quad (n=0,1,2,\ldots),
\end{equation}
and satisfy the second order differential equation
\begin{equation}
\label{eq1.2}
(1-z^2)y'' - (2\lambda + 1)zy' + n(n + 2\lambda)y=0.
\end{equation}

An alternative representation is given in terms of the hypergeometric function, namely \cite[Eqs. 15.9.2, 18.5.10]{NIST:DLMF}
\begin{multline}
\label{eq1.3}
C^{(\lambda)}_{n}(z)=\frac{{(2\lambda)_{n}}}{n!}
F\left(-n,2\lambda+n;\lambda+\tfrac{1}{2};
\tfrac{1}{2}(1-z)\right)  \\
=\frac{(\lambda)_{n}(2z)^{n}}{n!}
F\left(-\frac{1}{2}n,-\frac{1}{2}n+\tfrac{1}{2};
1-\lambda-n;\frac{1}{z^2}\right).
\end{multline}

As $z \rightarrow 1$ we have from (\ref{eq1.3}), on noting from its definition \cite[Eq. 15.2.1]{NIST:DLMF} that $F(a,b;c,z)=1+\mathcal{O}(z)$ as $z \rightarrow 0$,
\begin{equation}
\label{eq1.6}
C^{(\lambda)}_{n}(z)=\frac{{(2\lambda)_{n}}}{n!}
\left\{1+\mathcal{O}(z-1) \right\}.
\end{equation}

We also note from (\ref{eq1.1}) (or (\ref{eq1.3})) that
\begin{equation}
\label{eq1.7}
C^{(\lambda)}_{n}(z)
=\frac{(\lambda)_{n}(2z)^{n}}{n!}
\left\{1+\mathcal{O}\left(\frac{1}{z}\right)\right\}
\quad (z \rightarrow \infty).
\end{equation}

A solution that is $\mathcal{O}(z^{-n-2\lambda})$ as $z \to \infty$ for $|\arg(z)|<\pi$, will be defined below (see (\ref{eq1.5}) and (\ref{eq1.15})). Thus this is the recessive solution of (\ref{eq1.2}) at that singularity, whereas $C^{(\lambda)}_{n}(z)$ is a dominant solution.

In this paper we shall derive uniform asymptotic expansions for these functions for large positive integer values of $n$, and the differential equation (\ref{eq1.2}) plays a central role in deriving these. Throughout this paper we assume that $\lambda$ is nonnegative and bounded, with $z$ taking unbounded real or complex values in the right half plane $\Re(z) \geq 0$. Extension to the left half plane follows immediately from
\begin{equation}
\label{eq2.22b}
C^{(\lambda)}_{n}(-z)=
(-1)^n C^{(\lambda)}_{n}(z),
\end{equation}
which follows from (\ref{eq1.1}).

Gegenbauer polynomials have many applications, including potential theory and harmonic analysis. They are also important on account of their relationship to the Chebyshev polynomials $T_{n}(z)$, since these and their derivatives play a central role in numerical analysis. The asymptotic behavior of $T_{n}(z)$ for large $n$ follows trivially from their explicit representation $T_{n}(\cos(\theta))=\cos(n\theta)$, but in contrast the behavior of their derivatives is not so straightforward. However, for bounded $r$th derivatives, uniform asymptotic expansions as $n \rightarrow \infty$ follow immediately from the results in this paper, via the following simple identity (which is somewhat difficult to find in the literature).

\begin{lemma}
For $r=1,2,3,\ldots ,n$
\begin{equation}
\label{eq1.8}
\frac{d^{r}}{dz^{r}}T_{n}(z)
=2^{r-1} (r-1)! n C^{(r)}_{n-r}(z).
\end{equation}
\end{lemma}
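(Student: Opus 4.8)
The plan is to prove the identity $\frac{d^r}{dz^r}T_n(z) = 2^{r-1}(r-1)!\,n\,C^{(r)}_{n-r}(z)$ by relating both sides to known differentiation formulas for Gegenbauer polynomials. The key structural fact I would use is the standard derivative rule for Gegenbauer polynomials, namely that differentiating once raises the parameter $\lambda$ by one and lowers the degree by one:
\begin{equation}
\label{eqplan1}
\frac{d}{dz}C^{(\lambda)}_{m}(z) = 2\lambda\, C^{(\lambda+1)}_{m-1}(z).
\end{equation}
Since Chebyshev polynomials are the limiting/special case of Gegenbauer polynomials connected through $T_n(z) = \tfrac{n}{2}\lim_{\lambda\to 0}\lambda^{-1}C^{(\lambda)}_n(z)$ (equivalently $T_n = \tfrac12 n\, C^{(0)}_n$ in the renormalized sense), the first derivative of $T_n$ should already produce a Gegenbauer polynomial with parameter $1$. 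My first step would be to establish the $r=1$ base case directly, showing $T_n'(z) = n\,C^{(1)}_{n-1}(z)$, which matches the claimed formula since $2^{0}\,0!\,n = n$.

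The main body of the argument would then be an induction on $r$. Assuming the formula holds for some $r$, I would differentiate both sides once more and apply \eqref{eqplan1} to the right-hand side with $\lambda = r$ and $m = n-r$, obtaining
\begin{equation}
\label{eqplan2}
\frac{d^{r+1}}{dz^{r+1}}T_n(z) = 2^{r-1}(r-1)!\,n\cdot 2r\,C^{(r+1)}_{n-r-1}(z) = 2^{r}\,r!\,n\,C^{(r+1)}_{(n-r-1)}(z),
\end{equation}
which is precisely the claimed identity with $r$ replaced by $r+1$. The inductive step is therefore essentially immediate once the base case and the derivative rule \eqref{eqplan1} are in hand. Rather than induction, one could alternatively derive the result in one shot by iterating \eqref{eqplan1} exactly $r$ times starting from $C^{(1)}_{n-1}$, collecting the product $2\cdot(1)\cdot 2\cdot(2)\cdots$ of the ascending parameter factors, which telescopes to $2^{r-1}(r-1)!$; I would likely present whichever is cleaner.

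The genuinely delicate point, and the step I expect to be the main obstacle, is pinning down the base case $T_n'(z) = n\,C^{(1)}_{n-1}(z)$ rigorously, because Chebyshev polynomials sit at the degenerate value $\lambda = 0$ where the normalization $(2\lambda)_n/n!$ and the representation \eqref{eq1.1} must be handled with a limit. I would therefore verify the base case independently of the Gegenbauer normalization—for instance by comparing the explicit series \eqref{eq1.1} for $C^{(1)}_{n-1}(z)$ term-by-term against the termwise derivative of $T_n(\cos\theta)=\cos(n\theta)$, or equivalently against the known expansion of $T_n(z)$, and confirming the constant $n$. Once the $\lambda=0\to\lambda=1$ transition is secured at $r=1$, the remaining iterations stay safely within the regime $\lambda \geq 1$ where \eqref{eqplan1} and \eqref{eq1.1} are unambiguous, so no further subtleties arise. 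A final bookkeeping check would confirm the degree condition $r \leq n$ guarantees $C^{(r)}_{n-r}$ is a genuine (nonzero) polynomial, consistent with the stated range $r=1,2,\ldots,n$.
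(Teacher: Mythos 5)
Your proof is correct, but it takes a genuinely different route from the paper's. The paper proves the identity in one shot from the hypergeometric representations $T_{n}(z)=F(-n,n;\tfrac12;\tfrac12(1-z))$ and $C^{(\lambda)}_{n}(z)=\frac{(2\lambda)_{n}}{n!}F(-n,2\lambda+n;\lambda+\tfrac12;\tfrac12(1-z))$ together with the $r$th-derivative formula $\frac{d^{r}}{dz^{r}}F(a,b;c;z)=\frac{(a)_{r}(b)_{r}}{(c)_{r}}F(a+r,b+r;c+r;z)$; the constant $2^{r-1}(r-1)!\,n$ then drops out of a Pochhammer-symbol computation, and no induction or separate base case is needed because the hypergeometric parameterization is nondegenerate at $c=\tfrac12$. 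You instead iterate the Gegenbauer parameter-raising rule $\frac{d}{dz}C^{(\lambda)}_{m}(z)=2\lambda\,C^{(\lambda+1)}_{m-1}(z)$ (DLMF 18.9.19, which the paper itself invokes later in \cref{sec4}) starting from the classical base case $T_{n}'(z)=n\,C^{(1)}_{n-1}(z)=n\,U_{n-1}(z)$, and the product $n\cdot 2\cdot 1\cdot 2\cdot 2\cdots 2(r-1)$ telescopes to $2^{r-1}(r-1)!\,n$ exactly as claimed. Both arguments are complete and short. What your approach buys is transparency: each differentiation step is a single standard identity and the constant accumulates visibly, whereas the paper's route buries the arithmetic in ratios of Pochhammer symbols. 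What it costs is precisely the point you flag yourself: the base case sits at the degenerate normalization $\lambda=0$ of the Gegenbauer family and must be verified independently (e.g.\ via $T_{n}(\cos\theta)=\cos(n\theta)$ and $U_{n-1}(\cos\theta)=\sin(n\theta)/\sin\theta$), a step the paper's hypergeometric route sidesteps entirely since $T_{n}$ there is represented directly as an ${}_2F_1$ rather than as a limit of Gegenbauer polynomials.
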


\begin{proof}

From \cite[Eqs. 15.5.2, 18.5.7, 18.7.3]{NIST:DLMF}
\begin{equation*}
\label{eq1.9}
T_{n}(z)=F\left ( -n,n;\tfrac{1}{2};\tfrac{1}{2}(1-z) \right ),
\end{equation*}
and 
\begin{equation*}
\label{eq1.10}
\frac{d^{r}}{dz^{r}}F(a,b;c;z)=
\frac{(a)_{r}(b)_{r}}{(c)_{r}}F(a+r,b+r;c+r;z).
\end{equation*}
Then use these along with (\ref{eq1.3}) and (\ref{eqGAMMA}) below.
\end{proof}

Gegenbauer polynomials are well known to be special cases of the Jacobi polynomials, and (see for example \cite[Eq. 18.7.1]{NIST:DLMF}) this relationship is given by
\begin{equation}
\label{eq1.4}
C^{(\lambda)}_{n}(z)
=\frac{(2\nu+1)_{n}}
{(\nu+1)_{n}}
P_{n}^{(\nu,\nu)}(z),
\end{equation}
where for convenience we have introduced  the parameter
\begin{equation}
\label{eq1.5}
\nu = \lambda -\tfrac{1}{2}.
\end{equation}

Most of the existing asymptotic results for $C^{(\lambda)}_{n}(z)$ follow from those for Jacobi polynomials and then appealing to (\ref{eq1.4}). In \cite{Baratella:1988:BET} an asymptotic expansion was provided for the Jacobi polynomial $P_{n}^{(p,q)}(z)$ in terms of Bessel functions, holding uniformly for $-1 <-1+\delta \leq z \leq 1$. This used the theory of differential equations having a simple pole and a large parameter, given by Olver \cite[Chap. 12]{Olver:1997:ASF}; see (\ref{eq2.3}) - (\ref{eq2.5}) below. In \cite{Elliott:1971:AEJ} results for complex $z$ were obtained, also using Olver's method and valid at $z=1$, with asymptotic expansions furnished for the Jacobi polynomial $P_{n}^{(p,q)}(z)$ and a companion solution $Q_{n}^{(p,q)}(z)$ (see (\ref{eq1.11}) below). There are no error bounds and the coefficients are also hard to compute.

In \cite{Boyd:1990:CSP} and \cite{Dunster:2004:CSP} the authors also considered differential equations with a simple pole, but took a different approach to Olver. In both papers exact, rather than asymptotic, solutions were considered, which again involved Bessel functions, but two slowly varying coefficient functions instead of two asymptotic series. These are analytic at the pole $z=1$, and in both cases the results were applied to associated Legendre functions. In \cite{Boyd:1990:CSP} it was shown how these functions could be expanded as a traditional asymptotic series, and error bounds were derived for both. The coefficients are as difficult to compute as Olver's (indeed are equivalent), but while the bounds are simpler than Olver's, they still involve the coefficients and hence are not easy to compute beyond a few terms. In \cite{Dunster:2004:CSP} the expansions for the slowly-varying coefficient functions were convergent in certain unbounded regions of the complex plane containing the pole. Being convergent no error bounds were necessary, but the coefficients involved are again hard to compute.

Returning to known results for Jacobi and related functions, the most comprehensive asymptotic results were given in \cite{Dunster:1999:AEJ}. Using a differential equation approach for a coalescing turning point and a simple or double pole, uniform asymptotic approximations and expansions were derived, with explicit error bounds, that are uniformly valid in unbounded complex domains containing the singularity, with a wide range of parameters allowed (in particular for small to large $\lambda$ when specialized to Gegenbauer polynomials). The price paid is that the transformations of variables, coefficients and error bounds are quite complicated.

Integral methods have also been commonly used for the asymptotic approximation of Jacobi polynomials; for example in \cite{Frenzen:1985:AJP} an expansion for $P_{n}^{(p,q)}(z)$ was obtained in terms of a sequence of Bessel functions of increasing order, uniformly valid in the same interval as in \cite{Baratella:1988:BET}. Also using an integral method, in \cite{Bai:2007:UEJ} and \cite{Wong:2004:JCD} expansions similar to that given in \cite{Baratella:1988:BET} were derived, and in \cite{Ursell:2007:INC} and \cite{Wong:2003:EET} the case of $z$ complex lying in a neighborhood of the singularity at $z=1$ was considered.

Our approach is similar to \cite{Boyd:1990:CSP} and \cite{Dunster:2004:CSP} in that we also consider exact coefficient functions for solutions, but we differ from those two papers by following the method of \cite{Dunster:2017:COA} for the case of a simple turning point. It centers on the Liouville-Green (LG) expansions of exponential form along with the error bounds constructed in \cite{Dunster:2020:LGE}. We obtain expansions involving coefficients that are straightforward to evaluate recursively, and likewise have explicit and readily computable error bounds, at least for $z$ not too close to the pole. Moreover, as in \cite{Dunster:2017:COA} and \cite{Dunster:2021:SEB}, we use Cauchy's integral formula to evaluate the approximations in a full neighborhood of the pole, as well as bound the error terms there.

The significance of the present paper is that our expansions are uniformly valid in unbounded complex domains, including the oscillatory interval, with coefficients and error bounds that are not only simpler and easier to compute for the Gegenbauer polynomials considered here, but also when applied to general asymptotic solutions of differential equations having a simple pole.

Let us now introduce some numerically satisfactory solutions of (\ref{eq1.2}). These are important in their own right, and also play an important role when constructing expansions valid at the pole. For the first of these we consider the Jacobi function defined by \cite{Szego:1975:OP} (and used in \cite{Dunster:1999:AEJ} and \cite{Elliott:1971:AEJ}), given by
\begin{multline}
\label{eq1.11}
Q_{n}^{(p,q)}(z)=\frac{1}{2}\Gamma(n+p+1)\Gamma(n+q+1)
\left ( \frac{2}{z-1} \right )^{n+p+q+1} \\
\times \textbf{F}
\left(n+p+q+1,n+q+1;2n+p+q+2;2(1-z)^{-1}\right),
\end{multline}
where $\mathbf{F}$ is Olver's scaled hypergeometric function defined by
\begin{equation}
\label{eq1.12}
\textbf{F}(a,b;c;z)=
\frac{F(a,b;c;z)}{\Gamma(c)}
=\sum_{s=0}^{\infty}\frac{(a)_{s}(b)_{s}}
{\Gamma(c+s)}\frac{z^{s}}{s!},
\end{equation}
which unlike the unscaled $F$ is defined for all values of the parameters. This hypergeometric function, as it appears in (\ref{eq1.11}), has branch points at $z= \pm 1$ and, in conjunction with the factor $(z-1)^{-n-p-q-1}$, takes its principal value in the $z$-plane having a cut along $(-\infty,1]$. The significance of $Q_{n}^{(p,q)}(z)$ is that it is recessive at $z=\infty$ in the principal plane.

With (\ref{eq1.4}) in mind we then define the companion to $C^{(\lambda)}_{n}(z)$ by
\begin{multline}
\label{eq1.13}
D_{n}^{(\lambda)}(z)
=\frac{(2\nu+1)_{n}}
{(\nu+1)_{n}}
Q_{n}^{(\nu,\nu)}(z)
=
\frac{k_{n}(\nu)}{
 \left\{2(z-1)\right\}^{2\nu+n+1}}
 \\
\times F\left(2\nu+n+1,\nu+n+1;
2\nu+2n+2;2(1-z)^{-1}\right),
\end{multline}
where
\begin{equation}
\label{eq1.14}
k_{n}(\nu) =
\frac{\pi \Gamma \left( 2\nu+n+1 \right)}{\Gamma \left( \nu+\tfrac{1}{2} \right)\Gamma
 \left( \nu+n+\tfrac{3}{2} \right)}.
\end{equation}
This too is recessive at $z=\infty$ in the principal plane. Specifically, from the behavior of $F$ for small argument we see from (\ref{eq1.13}) that
\begin{equation}
\label{eq1.15}
D_{n}^{(\lambda)}(z) =
\frac{k_{n}(\nu)}
{(2z)^{2\nu+n+1}}
\left\{1+\mathcal{O}\left(\frac{1}{z}\right)
\right\} \quad (z \rightarrow \infty,
\, |\arg(z)| < \pi),
\end{equation}
which should be compared to the dominant behavior of $C_{n}^{(\lambda)}(z)$ given by (\ref{eq1.7}).

Next, from (\ref{eq1.13}) and \cite[Eq. (1.13)]{Dunster:1999:AEJ}, we find as $z \rightarrow 1$ for $\nu>0$ that it is a dominant solution at this point, with the behavior
\begin{equation}
\label{eq1.16}
D_{n}^{(\lambda)}(z) \sim
\frac{\sqrt{\pi} \,
\Gamma(\nu)}
{2\Gamma(\nu+\tfrac{1}{2})}
\frac{1}{\left\{2(z - 1)\right\}^{\nu}}.
\end{equation}

We now introduce two more solutions, $D_{n,\pm 1}^{(\lambda)}(z)$ say, that are recessive as $z \rightarrow \pm i \infty$ across cut $[-1,1]$. We define these by
\begin{equation}
\label{eq1.17}
D_{n,\pm 1}^{(\lambda)}(z) =e^{\pm \nu \pi i }
D_{n}^{(\lambda)}\left(1+(z-1)e^{\pm 2\pi i}\right),
\end{equation}
where $1+(z-1)e^{\pm 2\pi i}$ represents the branch of the multi-valued function $D_{n}^{(\lambda)}(z)$ as $z$ encircles the branch point $z=1$ once in the positive (respectively negative) direction.

We note that
\begin{equation}
\label{eq1.17a}
\overline{D_{n,\pm 1}^{(\lambda)}(\bar{z})}
=D_{n,\mp 1}^{(\lambda)}(z).
\end{equation}
These two functions, along with $C_{n}^{(\lambda)}(z)$ and $D_{n}^{(\lambda)}(z)$, are solutions of the linear second order differential equation (\ref{eq1.2}), and hence are linear combinations of one another. These relationships are important, and from (\ref{eq1.4}), (\ref{eq1.13}) and \cite[Eq. (1.28)]{Dunster:1999:AEJ} are found to be
\begin{equation}
\label{eq1.18}
\cos(\nu \pi) D_{n}^{(\lambda)}(z) =
\tfrac{1}{2}
D_{n,1}^{(\lambda)}(z)
+ \tfrac{1}{2}D_{n,-1}^{(\lambda)}(z),
\end{equation}
\begin{equation}
\label{eq1.19}
\cos(\nu \pi) C_{n}^{(\lambda)}(z) =
\frac{i }{2\pi}  
\left\{
e^{\nu \pi i }
D_{n,1}^{(\lambda)}(z)
-e^{-\nu \pi i }
D_{n,-1}^{(\lambda)}(z)
\right\},
\end{equation}
and
\begin{equation}
\label{eq1.20}
D_{n,\pm 1}^{(\lambda)}(z)=e^{\mp \nu\pi i}
D_{n}^{(\lambda)}(z) \mp \pi i 
C_{n}^{(\lambda)}(z).
\end{equation}
Note as $z \rightarrow \infty$ in the principal plane we have from (\ref{eq1.7}), (\ref{eq1.15}) and (\ref{eq1.20})
\begin{equation}
\label{eq1.21}
D_{n,\pm 1}^{(\lambda)}(z)
=\mp 
\frac{\pi i(\lambda)_{n}(2z)^{n}}{n!}
\left\{1+\mathcal{O}\left(\frac{1}{z}\right)\right\}.
\end{equation}

The plan of this paper is as follows. In \cref{sec2} we obtain LG asymptotic expansions in terms of elementary functions that are valid in certain unbounded regions of the complex plane, but must exclude a neighborhood of the simple pole $z=1$. These expansions are obtained for $D_{n,}^{(\lambda)}(z)$ and $D_{n,\pm 1}^{(\lambda)}(z)$, as well as one other solution that we define and require later, using the expansions and error bounds given in \cite{Dunster:2020:LGE}. Unlike standard LG expansions, these asymptotic expansions have the coefficients appearing in the exponent, which in general makes their evaluation simpler, as well as providing simpler and readily computable error bounds.

The form of our LG expansions is also more amenable to turning point problems, as studied in \cite{Dunster:2017:COA}, and as we show in \cref{sec3} this is also true for equations having a simple pole. Thus, in \cref{sec3} we obtain exact expressions for all solutions of (\ref{eq1.1}), including of course the Gegenbauer polynomials, that involve modified Bessel functions and two slowly varying coefficient functions. The coefficients in the asymptotic expansions for these coefficient functions involve the ones appearing in our LG expansions, and so are not analytic at $z=1$. However, this is overcome in \cref{sec4}, where we specialize the results to the real interval $0 \leq z \leq 1$ by re-expanding in inverse powers of the large parameter, providing new coefficients which we prove to be analytic at the pole.

As in \cite{Dunster:2017:COA} we can compute the coefficients in \cref{sec3} for complex $z$ near or at $z=1$ without re-expanding this way, by appealing to Cauchy's integral formula. We do this in \cref{sec5}, in which simple error bounds are also constructed that only involve elementary functions. Our method of constructing asymptotic expansions and error bounds for asymptotic solutions of differential equations with a simple pole is quite general, and it is planned to apply it to other functions, such as conical functions.

The assumption that $n$ be an integer can be relaxed for our expansions of the companion functions, and indeed of the polynomial $C_{n}^{(\lambda)}(z)$ itself if one instead regards it as a hypergeometric function, via (\ref{eq1.3}). Our results can then be compared with a related result \cite[Eq. 15.12.4]{NIST:DLMF}.

\section{Expansions involving elementary functions} 
\label{sec2}

We now provide LG expansions involving elementary functions. These are useful in their own right but are particularly important in subsequent sections in our construction of expansions that are valid at the simple pole. We recast the differential equation in a normal form that is applicable to both the theory of this section as well as the next. To do so we redefine the dependent variable via
\begin{equation}
\label{eq2.1}
w(z)=\left(z^2-1\right)^{(\nu+1)/2}y(z),
\end{equation}
along with the large parameter
\begin{equation}
\label{eq2.2}
u=\lambda+n= \nu+n+\tfrac{1}{2}.
\end{equation}
Then (\ref{eq1.2}) becomes
\begin{equation}
\label{eq2.3}
w''(z)=\left\{u^2f(z)+g(z)\right\}w(z),
\end{equation}
where
\begin{equation}
\label{eq2.4}
f(z)=\frac{1}{z^2-1},
\end{equation}
and
\begin{equation}
\label{eq2.5}
g(z)=-\frac{1}{4 \left ( z^2-1 \right )}
+\frac{\nu^2 -1}
{\left ( z^2-1 \right )^2}.
\end{equation}

Note that the dominant term $f(z)$ has simple poles at $z= \pm 1$, and typically LG expansions break down at such points. Indeed the expansions we derive in this section are not valid at these points, but despite this we will use them for constructing expansions that are valid at $z=1$.

The choice (\ref{eq2.2}) for the form of the large parameter $u$, leading to the partition (\ref{eq2.4}) and (\ref{eq2.5}), is used in several papers for the more general Jacobi polynomials (see \cite{Dunster:1999:AEJ}, \cite{Frenzen:1985:AJP}), and is explained by making the subsequent asymptotic approximations valid at $z= \infty$; see also \cite[Remark 1.2]{Dunster:2020:LGE}.

Four numerically satisfactory solutions of (\ref{eq1.2}) are given by $(z^2-1)^{(\nu+1)/2}y(z)$, where $y(z)$ is $C^{(\lambda)}_{n}(z)$, $D^{(\lambda)}_{n}(z)$ or $D^{(\lambda)}_{n,\pm 1}(z)$. Before construction of our LG expansions, we identify another important solution, which will be useful in \cref{sec3}. This comes from observing that this differential equation is unchanged if $\nu$ and $n$ are replaced by $-\nu$ and $-n-1$, respectively. Thus from (\ref{eq1.3}), (\ref{eq1.5}), (\ref{eq1.12}) and (\ref{eq2.1}) we have a solution $(z^2-1)^{(\nu+1)/2}\hat{C}^{(\lambda)}_{n}(z)$ where
\begin{equation}
\label{eq3.8}
\hat{C}^{(\lambda)}_{n}(z)=
\frac{\pi^{1/2}}{\Gamma(\lambda)}
\left(z^2-1\right)^{-\nu}
\mathbf{F}\left(-2\nu-n,n+1;1-\nu;
\tfrac{1}{2}(1-z)\right).
\end{equation}
Here we have introduced the factor $\pi^{1/2}/\Gamma(\lambda)$ for later convenience.

The characterizing behavior of this function is that $(z-1)^{\nu}\hat{C}^{(\lambda)}_{n}(z)$ is analytic at $z=1$ for all $\nu$. We observe from (\ref{eq1.12}) and (\ref{eq3.8}) that as $z \rightarrow 1$
\begin{equation}
\label{eq3.9}
\hat{C}^{(\lambda)}_{n}(z)=
\frac{\pi^{1/2}}
{\Gamma(\nu+\frac{1}{2})\Gamma(1-\nu) \left\{ 2(z-1)\right\}^{\nu}}
\left\{1+\mathcal{O}(z-1) \right\}
\quad  (\nu \neq 1,2,3,\ldots).
\end{equation}
From (\ref{eq3.8}), \cite[Chap. 5, Eq. (9.05)]{Olver:1997:ASF} and the gamma function reflection formula \cite[Eq. 5.5.3	]{NIST:DLMF}
\begin{equation}
\label{eqGAMMA}
\Gamma(x)\Gamma(1-x)=\frac{\pi}{\sin(\pi x)},
\end{equation}
we have in the exceptional case the recessive behavior as $z \rightarrow 1$
\begin{equation}
\label{eq3.9a}
\hat{C}^{(\lambda)}_{n}(z)=
\frac{(2\nu+1)_{n}}{n!}
\left\{1+\mathcal{O}(z-1) \right\}
\quad  (\nu = 1,2,3,\ldots).
\end{equation}

As $z \rightarrow \infty$, $|\arg(z-1)|<\pi$ and $\nu>-n-\frac{1}{2}$ from (\ref{eq2.2}), (\ref{eq3.8}) and  \cite[Eq. 15.8.2]{NIST:DLMF}
\begin{equation}
\label{eq3.10}
\hat{C}^{(\lambda)}_{n}(z)=
\frac{(\lambda)_{n} (2z)^{n}}{n!}
\left\{1+\mathcal{O}\left(\frac{1}{z}\right)
\right\}.
\end{equation}

Next $\hat{C}^{(\lambda)}_{n}(z)$ can be expressed as a linear combination of $C^{(\lambda)}_{n}(z)$ and $D^{(\lambda)}_{n}(z)$. The constants in this relation can be found by letting $z \rightarrow 1$ and $z \rightarrow \infty$. Using (\ref{eq1.6}), (\ref{eq1.7}), (\ref{eq1.15}), (\ref{eq1.16}), (\ref{eq3.9}), (\ref{eqGAMMA}) and (\ref{eq3.10}) we determine these constants, and consequently the desired linear relationship can be established as
\begin{equation}
\label{eq3.11}
\hat{C}^{(\lambda)}_{n}(z)
=C^{(\lambda)}_{n}(z)+\frac{2 \sin(\nu \pi)}{\pi}
D^{(\lambda)}_{n}(z).
\end{equation}
From this, (\ref{eq1.18}) and (\ref{eq1.19}) we find that
\begin{equation}
\label{eq3.12}
\cos(\nu \pi)\hat{C}^{(\lambda)}_{n}(z)
=\frac{i}{2 \pi}
\left\{e^{-\nu \pi i } D^{(\lambda)}_{n,1}(z)
-e^{\nu \pi i } D^{(\lambda)}_{n,-1}(z)
\right\}.
\end{equation}

Let us now construct the LG expansions for solutions of (\ref{eq2.3}). From (\ref{eq2.4}), (\ref{eq2.5}) and \cite[Eqs. (1.3) and (1.6)]{Dunster:2020:LGE} we find that the appropriate transformed independent variable is given by
\begin{equation}
\label{eq2.6}
\xi =\int_{1}^{z} f^{1/2}(t) dt
=\ln\left(z + \sqrt{z^2 - 1}\right),
\end{equation}
with branches taken so that it is positive when $z>1$ and by continuity elsewhere in the plane having a cut along the interval $-\infty < z \leq 1$.

The corresponding Schwarzian derivative is
\begin{equation}
\label{eq2.7}
\psi(\xi) =\frac{4f(z) {f}^{\prime \prime }(z) -5 
f^{\prime 2}(z) }{16f^{3}(z) }+\frac{g(z) }{f(z) }
=\frac{1-4\nu^2}{4\left(1-z^2\right)}.
\end{equation}

Now apply \cite[Eqs. (1.10) - (1.12)]{Dunster:2020:LGE} to obtain the coefficients in our expansions
\begin{equation}
\label{eq2.9}
E_{s}(\xi) =\int F_{s}(\xi) d \xi
\quad (s=1,2,3,\ldots),
\end{equation}
where 
\begin{equation}
\label{eq2.10}
F_{1}(\xi) ={\tfrac{1}{2}}\psi(\xi), 
\quad
F_{2}(\xi) =-{\tfrac{1}{4}}{\psi }^{\prime }(\xi),
\end{equation}
and
\begin{equation}
\label{eq2.11}
F_{s+1}(\xi) =-{\frac{1}{2}}
\frac{dF_{s}(\xi) }{d \xi}-{\frac{1}{2}}
\sum\limits_{j=1}^{s-1}{F_{j}(\xi) F_{s-j}(\xi) }
\quad (s=2,3,4,\ldots).
\end{equation}

As we shall shortly see, all integrations can be evaluated more easily in terms of the variable defined by
\begin{equation}
\label{eq2.12}
\beta=\frac{z}{\sqrt{z^2 - 1}},
\end{equation}
or equivalently, from (\ref{eq2.12}),
\begin{equation}
\label{eq2.13}
\xi=\frac{1}{2}\ln\left(\frac{\beta+1}{\beta-1}\right).
\end{equation}
In both principal branches of the multi-valued functions apply. Now from (\ref{eq2.13}) we have
\begin{equation}
\label{eq2.12a}
\frac{d \xi}{d \beta}
=\frac{1}{1-\beta^2},
\end{equation}
and hence from (\ref{eq2.7}) and (\ref{eq2.10}) - (\ref{eq2.12}) we find that each $F_s(\xi)$ regarded as a function of $\beta$, which we denote by $\tilde{F}_{s}(\beta)$, is a polynomial in $\beta$, with
\begin{equation}
\label{eq2.14}
\tilde{F}_{1}(\beta )
=\tfrac{1}{8}\left (4\nu^2-1 \right)\left (\beta^2-1 \right),
\end{equation}
\begin{equation}
\label{eq2.15}
\tilde{F}_{2}(\beta)=\tfrac{1}{8}
\left( 4\nu^2-1\right)\beta\left(\beta^2-1\right),
\end{equation}
and
\begin{equation}
\label{eq2.16}
\tilde{F}_{s+1}(\beta)
=\frac{1}{2}\left(\beta^2-1\right)
\frac{d\tilde{F}_{s}(\beta)}{d\beta}-{\frac{1}{2}}
\sum\limits_{j=1}^{s-1}
\tilde{F}_{j}(\beta)\tilde{F}_{s-j}(\beta)
\quad (s=2,3,4,\ldots).
\end{equation}
Consequently, from (\ref{eq2.9}) and (\ref{eq2.12a}), and the fact that $\tilde{F}_{s}(\pm 1)=0$ ($s=1,2,3,\ldots$) which follows from induction on (\ref{eq2.14}) - (\ref{eq2.16}), we find that the coefficients $E_{s}(\xi)$, regarded as functions of $\beta$ and denoted by $\tilde{E}_{s}(\beta)$, are also polynomials in $\beta$. Specifically, they are given by
\begin{equation}
\label{eq2.17}
\tilde{E}_{s}(\beta) =-\int_{0}^{\beta}
\frac{\tilde{F}_{s}(b)}{b^2-1} d b
\quad (s=1,2,3,\ldots).
\end{equation}

Here we have chosen the arbitrary integration constants so that $\tilde{E}_{s}(0)=0$. Observe that $\tilde{E}_{2s}(\beta)$ are even and $\tilde{E}_{2s+1}(\beta)$ are odd functions of $\beta$. As we shall see, it is necessary that $\tilde{E}_{2s+1}(\beta)$ are odd functions of $\beta$, and the consequence of this is the following, which is readily verified from (\ref{eq2.12}) and (\ref{eq2.14}) - (\ref{eq2.17}).
\begin{lemma}
\label{lemMero}
Regarded as functions of $z$ via (\ref{eq2.12}), $(z-1)^{1/2}\tilde{E}_{2s-1}(\beta)$ and $\tilde{E}_{2s}(\beta)$ ($s=1,2,3,\ldots$) are meromorphic functions at $z=1$ 
\end{lemma}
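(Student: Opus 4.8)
The plan is to reduce the claim to two facts already in hand: from (\ref{eq2.14})--(\ref{eq2.17}) each $\tilde{E}_{s}(\beta)$ is a polynomial in $\beta$, with $\tilde{E}_{2s}$ even and $\tilde{E}_{2s-1}$ odd (the parity observation recorded just above the lemma); and from (\ref{eq2.12}), squaring gives the single controlling identity
\begin{equation*}
\beta^{2}=\frac{z^{2}}{z^{2}-1},
\end{equation*}
which is rational in $z$ with a simple pole at $z=1$, hence meromorphic there. The even-index case then disposes of itself: since $\tilde{E}_{2s}(\beta)$ is an even polynomial it is a polynomial in $\beta^{2}$, and composing a polynomial with the rational function $\beta^{2}$ yields a function rational in $z$, in particular meromorphic at $z=1$. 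No fractional power is needed here, which is precisely why the lemma attaches the factor $(z-1)^{1/2}$ only to the odd-index coefficients.

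For the odd-index case I would factor one power of $\beta$ out, writing $\tilde{E}_{2s-1}(\beta)=\beta\,Q(\beta^{2})$ for a polynomial $Q$, so that
\begin{equation*}
(z-1)^{1/2}\tilde{E}_{2s-1}(\beta)=\bigl\{(z-1)^{1/2}\beta\bigr\}\,Q(\beta^{2}).
\end{equation*}
The factor $Q(\beta^{2})$ is rational in $z$ and hence meromorphic at $z=1$ by the reasoning just used, so the whole matter reduces to showing that $(z-1)^{1/2}\beta$ is analytic at $z=1$. The clean route is to square it: $\bigl\{(z-1)^{1/2}\beta\bigr\}^{2}=(z-1)\beta^{2}=z^{2}/(z+1)$, which is analytic and nonvanishing (equal to $\tfrac12$) at $z=1$. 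An analytic function that is nonzero on a simply connected neighborhood of $z=1$ has an analytic square root there, and $(z-1)^{1/2}\beta$ — defined through the branch conventions fixed in (\ref{eq2.6}) and (\ref{eq2.12}) — is by continuity one such branch, hence analytic at $z=1$. Multiplying this analytic factor by the meromorphic factor $Q(\beta^{2})$ produces a function meromorphic at $z=1$, which completes the argument.

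The step I expect to be the main obstacle is exactly this analyticity of $(z-1)^{1/2}\beta$. The square $z^{2}/(z+1)$ is manifestly analytic and nonzero, but one must confirm that the branch of $(z-1)^{1/2}$ and the branch of $\sqrt{z^{2}-1}$ implicit in (\ref{eq2.12}) are compatible, so that their half-power singularities at $z=1$ cancel rather than reinforce; equivalently, that the product is single-valued around $z=1$. The square-root-of-an-analytic-nonvanishing-function argument settles this once the conventions of (\ref{eq2.6}) are pinned down, and the parity observation preceding the lemma is precisely what guarantees that no isolated odd power of $\beta$ can appear without an accompanying $(z-1)^{1/2}$ to absorb the branch point.
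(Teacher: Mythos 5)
Your argument is correct and is exactly the verification the paper has in mind: the paper proves the lemma by simply declaring it ``readily verified'' from the parity observation (even-index $\tilde{E}_s$ even in $\beta$, odd-index odd) together with $\beta^2=z^2/(z^2-1)$, which is precisely the decomposition you carry out. Your only detour is squaring $(z-1)^{1/2}\beta$ to check analyticity; the direct simplification $(z-1)^{1/2}\beta=z/(z+1)^{1/2}$ settles the branch question immediately, but your route is equally valid.
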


We now are the position to apply \cite[Eq. (1.9) and Thm. 1.1]{Dunster:2020:LGE} to obtain LG asymptotic solutions of (\ref{eq2.3}) of the form
\begin{equation}
\label{eq2.18}
w^{\pm}(u,z) \sim f^{-1/4}(z)\exp \left\{\pm u\xi
+\sum\limits_{s=1}^{\infty}\frac{\tilde{E}_{s}(\beta) }
{(\pm u)^{s}} \right\}
\quad (u \rightarrow \infty).
\end{equation}

Before discussing regions of validity, let us identify these forms with our solutions of (\ref{eq2.3}). Firstly, we can assert that $(z^2-1)^{(\nu+1)/2}D^{(\lambda)}_{n}(z)$ and $w^{-}(u,z)$ are equal, to within a multiplicative constant, since both are the unique solutions of the same equation that are recessive at $z= +\infty$ ($\xi =+\infty$). Now, from (\ref{eq2.6}) and (\ref{eq2.12}) we see that $\xi=\ln(2z)+\mathcal{O}(z^{-2})$ and $\beta \rightarrow 1$ as $z \rightarrow +\infty$, and hence from (\ref{eq1.15}) we find the proportionality constant. As result we arrive at our first asymptotic expansion for large $u$, namely
\begin{equation}
\label{eq2.19}
D_{n}^{(\lambda)}(z) \sim 
\frac{k_{n}(\nu)}
{\left\{4\left(z^2-1\right)\right\}
^{(2\nu+1)/4}}
\exp \left\{ -u\xi
+\sum\limits_{s=1}^{\infty}(-1)^{s}\frac{\tilde{E}_{s}(\beta)
-\tilde{E}_{s}(1)}{u^{s}}
\right\}.
\end{equation}

Next let $z \rightarrow +\infty$ after crossing the cut $[-1,1]$ from above, and then continuing along the positive real axis; i.e.
\begin{equation*}
\label{eq2.19a}
1+(z-1)e^{2\pi i} \rightarrow +\infty.
\end{equation*}
Then from (\ref{eq2.6}) under this limit we have
\begin{equation}
\label{eq2.20}
\xi = -\ln(2|z|) +\mathcal{O}\left(z^{-3}\right),
\end{equation}
and from (\ref{eq2.12}) we correspondingly find that $\beta \rightarrow -1$.

In this case $D_{n,-1}^{(\lambda)}(z)$ is recessive and we find from (\ref{eq1.15}), (\ref{eq1.17}), (\ref{eq2.18}) and (\ref{eq2.20})
\begin{equation}
\label{eq2.21}
D_{n,-1}^{(\lambda)}(z) \sim 
\frac{i k_{n}(\nu)}
{\left\{4\left(z^2-1\right)\right\}
^{(2\nu+1)/4}}
\exp \left\{u\xi
+\sum\limits_{s=1}^{\infty}\frac{\tilde{E}_{s}(\beta)
-\tilde{E}_{s}(-1)}{u^{s}}
\right\}.
\end{equation}

Similarly, for the solution that is recessive as $z \rightarrow +\infty$ after crossing the cut $[-1,1]$ from below, we obtain
\begin{equation}
\label{eq2.22}
D_{n,1}^{(\lambda)}(z) \sim 
-\frac{i k_{n}(\nu)}
{\left\{4\left(z^2-1\right)\right\}
^{(2\nu+1)/4}}
\exp \left\{u\xi
+\sum\limits_{s=1}^{\infty}\frac{\tilde{E}_{s}(\beta)
-\tilde{E}_{s}(-1)}{u^{s}}
\right\}.
\end{equation}

Although the expansions in (\ref{eq2.21}) and (\ref{eq2.22}) differ only by a sign, the two functions are linearly independent, and these expansions have different domains of validity, as we discuss shortly.

Error bounds for these expansions are given in \cref{sec5}, and this theory also gives precise regions of validity, which include the points at infinity that we used in their derivation. For our purposes we are primarily interested in the half plane $|\arg(z)| \leq \pi/2$ since (\ref{eq2.22b}) and the following connection formula 
\begin{equation}
\label{eq2.22c}
D^{(\lambda)}_{n}\left(z e^{\pm \pi i}\right)=
(-1)^{n+1} e^{\mp 2 \nu \pi i} D^{(\lambda)}_{n}(z),
\end{equation}
which follows from (\ref{eq1.13}) and \cite[Eq. (1.24)]{Dunster:1999:AEJ}, extends our results to other values of $z$. 

As we show in \cref{sec5} the expansion (\ref{eq2.19}) is valid in a domain that contains all points in the half plane $|\arg(z)| \leq \pi/2$ except the singularity $z=1$, including all points above and below the cut $0 \leq z < 1$. Likewise, the bounds show that the expansion (\ref{eq2.21}) is valid in a domain that contains the first quadrant $0 \leq \arg(z) \leq \pi/2$, again with $z=1$ excluded, as well as points accessed by crossing the cut $[0,1]$ from above onto the entire negative imaginary axis $z=-i y$, $0 \leq y < \infty$. The domain of validity for (\ref{eq2.22}) is the complex conjugate of this, which includes the fourth quadrant $-\pi/2 \leq \arg(z) \leq 0$ with $z=1$ removed, and all points accessed by crossing the cut $[0,1]$ from below onto the entire positive imaginary axis $z=i y$, $0 \leq y < \infty$.

Although points arbitrarily close to but not equal to $z=1$ are theoretically included in the above domains of asymptotic validity, the expansions of course become less accurate for $z$ too close to this singularity, since in this case $\beta \rightarrow \infty$. For this reason we restrict use of the above LG expansions to $|z-1| \geq 1$, which we describe in more detail in \cref{sec5}.

For the first quadrant the corresponding (compound) LG expansion for $C_{n}^{(\lambda)}(z)$ follows from inserting (\ref{eq2.19}) and (\ref{eq2.21}) into (\ref{eq1.20}) (with the lower signs), and likewise for the fourth quadrant one can use (\ref{eq1.20}) (with the upper signs), (\ref{eq2.19}) and (\ref{eq2.22}). Alternatively, in the fourth quadrant one could use the results for the first quadrant along with Schwarz symmetry $\overline{C_{n}^{(\lambda)}(\bar{z})}=C_{n}^{(\lambda)}(z)$.

We finish this section by obtaining a useful formal expansion involving the constants $\tilde{E}_{s}(1)$ and the gamma function. To do so, we use
\begin{equation}
\label{eq2.22a}
\pi i C_{n}^{(\lambda)}(0) =
D_{n,-1}^{(\lambda)}(0+i0)
-e^{\nu\pi i} D_{n}^{(\lambda)}(0+i0),
\end{equation}
which comes from from (\ref{eq1.20}). Here $\pm i0$ denotes a point above (respectively below) the cut $(-\infty,1]$; note that $C_{n}^{(\lambda)}(0 \pm i0)=C_{n}^{(\lambda)}(0)$ since it is entire.

If we assume temporarily that $n$ is even, then from (\ref{eq2.1})
\begin{equation}
\label{eq2.24}
C^{(\lambda)}_{n}(0)=
(-1)^{n/2}\frac{\Gamma \left(\lambda+\tfrac{1}{2}n \right)}
{\Gamma (\lambda )\Gamma\left(\tfrac{1}{2}n+1\right)}.
\end{equation}
From (\ref{eq2.6})
\begin{equation*}
\label{eq2.23}
z=0+i0 \implies \xi=i \pi /2,
\end{equation*}
hence from (\ref{eq2.19}) and (\ref{eq2.21})
\begin{equation}
\label{eq2.25}
D_{n}^{(\lambda)}(0+i0) \sim 
(-1)^{(n/2)+1}
i e^{-\nu \pi i} 2^{-\nu-(1/2)} k_{n}(\nu)
\exp \left\{ 
\sum\limits_{s=1}^{\infty}(-1)^{s+1}
\frac{\tilde{E}_{s}(1)}{u^{s}}
\right\},
\end{equation}
and
\begin{equation}
\label{eq2.26}
D_{n,-1}^{(\lambda)}(0+i0) \sim 
(-1)^{n/2}i 2^{-\nu-(1/2)} k_{n}(\nu)
\exp \left\{
\sum\limits_{s=1}^{\infty}(-1)^{s+1}
\frac{\tilde{E}_{s}(1)}{u^{s}}
\right\}.
\end{equation}
Our desired relation then comes from (\ref{eq1.14}), (\ref{eq2.2}) and (\ref{eq2.22a}) - (\ref{eq2.26}), and reads as follows:
\begin{equation}
\label{eq2.27}
\frac {2^{\lambda-1}\Gamma (u+1) 
\Gamma \left(\frac {1}{2}u+
\frac{1}{2}\lambda\right)}
{\Gamma(u+\lambda ) 
\Gamma \left(\frac {1}{2}u-\frac {1}{2}\lambda+1
 \right) }
\sim \exp \left\{
\sum\limits_{s=1}^{\infty}(-1)^{s+1}
\frac{\tilde{E}_{s}(1)}{u^{s}}
\right\}  \quad (u \rightarrow \infty).
\end{equation}
By continuity we can now relax the assumption that $n$ be even.

\subsection{Modified Bessel functions}

We need similar expansions for modified Bessel functions, which we use in the next section for approximations to solutions of (\ref{eq2.3}) which are valid at the simple pole $z=1$. The functions in question are solutions of the equation
\begin{equation} 
\label{eq2.28}
\frac {d ^2 w}{d z^{2}} = \left\{ 1+
\frac {\nu^{2}-\frac{1}{4}}{z^{2}} \right\} w,
\end{equation}
given by $w=z^{1/2}K_{\nu}(z), \, z^{1/2}K_{\nu}(ze^{\pm \pi i})$. Here $K_{\nu}(z)$ is the modified Bessel function of the second kind, with the recessive property at infinity given by \cite[Eq. 10.25.3]{NIST:DLMF}
\begin{equation} 
\label{eq2.29}
K_{\nu}(z)\sim
\left(\frac{\pi}{2z}\right)^{1/2}
e^{-z}
\quad   \left(z \rightarrow \infty, \,
|\arg(z)| \leq \tfrac{3}{2} \pi - \delta\right).
\end{equation}

The other two functions $K_{\nu}(ze^{\pm \pi i})$ are analytic continuations, with recessive properties at infinity in the sectors indicated in (\ref{eq2.29}) rotated by $\mp \pi$. From \cite[Eq. 10.34.4]{NIST:DLMF} the three are related by
\begin{equation} 
\label{eq2.33}
\cos(\nu \pi)K_{\nu }(z)=\tfrac{1}{2}K_{\nu}\left ( ze^{\pi i } \right )+\tfrac{1}{2}K_{\nu}\left ( ze^{-\pi i } \right ).
\end{equation}

We now simply follow steps above in deriving (\ref{eq2.19}), but with $\xi=z$, $f(z)=1$ and $g(z)=(\nu^2-\frac{1}{4})z^{-2}$. We omit details since they are straightforward, and we obtain
\begin{multline} 
\label{eq2.30}
K_{\nu}(z)\sim
\left(\frac{\pi}{2z}\right)^{1/2}
\exp\left\{-z-\sum_{s=1}^{\infty} (-1)^s
\frac{a_{s}(\nu)}{s z^{s}}
\right\}
\\
\quad   \left(z \rightarrow \infty, \,
|\arg(z)| \leq \tfrac{3}{2} \pi - \delta\right),
\end{multline}
where
\begin{equation} 
\label{eq2.31}
a_{1}(\nu) =  a_{2}(\nu)
=\tfrac{1}{8}
\left(4\nu^{2}-1\right),
\end{equation}
and
\begin{equation} 
\label{eq2.32}
a_{s+1}(\nu)=\frac{1}{2}(s+1)a_{s}(\nu)
-\frac{1}{2}
\sum_{j=1}^{s-1}
a_{j}(\nu)a_{s-j}(\nu).
\end{equation}

From this we immediately deduce that
\begin{multline} 
\label{eq2.32a}
K_{\nu}\left(z e^{\pm \pi i}\right) \sim
\mp i \left(\frac{\pi}{2z}\right)^{1/2}
\exp\left\{z-\sum_{s=1}^{\infty} 
\frac{a_{s}(\nu)}{s z^{s}}
\right\} \\
\quad   \left(z \rightarrow \infty, \,
\left|\arg\left(z e^{\pm \pi i}\right)\right|
\leq \tfrac{3}{2} \pi - \delta\right).
\end{multline}

We note that the coefficients in (\ref{eq2.30}) differ from the explicitly-given ones that appear outside the exponential function in the standard form (see for example \cite[Eqs. 10.40.2 and 10.40.9]{NIST:DLMF}).

\section{Expansions in domains containing the simple pole} 
\label{sec3}

Let us now turn to obtaining the main results of this paper, namely expansions that are valid at the simple pole $z=1$. This is a modification of \cite[Chap. 12, Sect. 2.1]{Olver:1997:ASF}, and referring to (\ref{eq2.6}) we define the Liouville variable applicable here by
\begin{equation}
\label{eq3.1}
\zeta =\xi^2=
\left\{\ln\left(z + \sqrt{z^2 - 1}\right)\right\}^2.
\end{equation}
In contrast to $\xi$ this variable is analytic at $z=1$, and from (\ref{eq3.2}) we find that
\begin{equation}
\label{eq3.2}
\zeta = 2(z - 1) - \tfrac{1}{3}(z - 1)^2
+\mathcal{O}\left\{(z-1)^3\right\}
\quad (z \rightarrow 1).
\end{equation}
For the important oscillatory interval $-1\leq z\leq 1$ we find from (\ref{eq2.4}), (\ref{eq2.6}) and (\ref{eq3.1}) that $\zeta \leq 0$, with
\begin{equation}
\label{eq3.3}
(-\zeta)^{1/2} =\int_{z}^{1} \left\{-f(t) \right\}
^{1/2} dt
=\arccos{(z)}.
\end{equation}

Following the transformation \cite[Chap 12, Eq (2.06)]{Olver:1997:ASF} applied to our differential equation (\ref{eq2.3}) - (\ref{eq2.5}) we obtain the appropriate new equation
\begin{equation}
\label{eq3.4}
\frac{d^{2}W}{d\zeta^{2}}=\left\{\frac{u^{2}}{4\zeta }+ \frac{\nu ^{2}-1}{4\zeta^{2}}
+\frac{\tilde{\psi}(\zeta )}{\zeta }\right\}W,
\end{equation}
where
\begin{equation}
\label{eq3.5}
W(\zeta)=\left\{\zeta f(z)\right\}^{1/4}w(z)
= \left(\frac{\zeta}{z^2-1}\right)^{1/4}w(z),
\end{equation}
and
\begin{equation}
\label{eq3.6}
\tilde{\psi}(\zeta )
=\frac{1-4\nu^2}{16\zeta}
+\frac{g(z) }{4f(z) }
+\frac{4f(z) {f}^{\prime \prime }(z) 
-5 f^{\prime 2}(z) }{64f^{3}(z) }
=\frac{1-4\nu^2}{16}\left( \frac{1}{1-z^2}
+\frac{1}{\zeta} \right).
\end{equation}

In contrast to $\psi(\xi)$, given by (\ref{eq2.7}), $\tilde{\psi}(\zeta )$ is analytic at $z=1$ ($\zeta=0$), and in particular one finds that
\begin{equation*}
\label{eq3.6a}
\tilde{\psi}(\zeta )
=\frac{1-4\nu^2}{16}\left[
\frac{1}{3}-\frac{2}{15}(z-1) 
+\mathcal{O}\left\{(z-1)^2\right\}\right]
\quad (z \rightarrow 1).
\end{equation*}

Note that the dominant term of (\ref{eq3.4}), namely the term involving the large parameter $u^2$, has a simple pole at $\zeta =0$, similarly to its parent (\ref{eq2.3}). Also observe that if we neglect the term $\tilde{\psi}(\zeta )/\zeta$ the equation has solutions $\zeta^{1/2}\mathcal{L}_{\nu}(u \zeta^{1/2})=\xi\mathcal{L}_{\nu}(u \xi)$, where $\mathcal{L}_{\nu}(z)$ is any modified Bessel function, including the three $K$ functions described earlier, as well as the modified Bessel function $I_{\nu}(z)$ which is recessive at $z=0$ for $\nu \geq 0$ (see \cite[Eq. 10.25.2]{NIST:DLMF}). Indeed these are our approximants, and the reader is referred to \cite[Chap. 12, Thm. 9.1]{Olver:1997:ASF} for more details.

In this reference Olver obtains asymptotic solutions of (\ref{eq3.4}) of the form
\begin{multline}
\label{eq3.6b}
W_{2n+1,j}(u,\zeta) =
\xi \mathcal{L}_{\nu}(u \xi)
\sum\limits_{s=0}^{n}
\frac{A_{s}(\zeta)}{u^{2s}} \\
+(-1)^{j+1} \frac{\xi^2}{u}\mathcal{L}_{\nu+1}(u \xi)
\sum\limits_{s=0}^{n-1}\frac{B_{s}(\zeta)}{u^{2s}}
+\epsilon_{2n+1,j}(u,\zeta)
\quad (j=1,2),
\end{multline}
where $\mathcal{L}=I$ and $\mathcal{L}=K$ for $j=1,2$ respectively. The coefficients $A_{s}(\zeta)$ and $B_{s}(\zeta)$ are analytic in the domain under consideration (which contains the pole $\zeta=0$), and satisfy certain recursion relations which involve repeated integration (rendering them typically hard to compute). Complicated bounds are provided for the error terms $\epsilon_{2n+1,j}(u,\zeta)$, and their derivatives, which involve so-called auxiliary functions for the modified Bessel functions.

We prefer to now return to the original variable $z$. On referring to (\ref{eq3.5}) and (\ref{eq3.6b}), and following \cite{Boyd:1990:CSP} and \cite{Dunster:2017:COA}, we seek   solutions of (\ref{eq2.3}) in the form
\begin{multline}
\label{eq3.15}
w_{j}(u,z) =\xi^{1/2}f^{-1/4}(z) 
\\ \times
\left\{\mathcal{L}_{\nu}^{(j)}
(u \xi) A(u,z) 
+(-1)^{j+1}\mathcal{L}_{\nu+1}^{(j)}(u \xi)
B(u,z) \right\}
\quad (j=0,\pm 1),
\end{multline}
where
\begin{equation} 
\label{eq3.13}
\mathcal{L}_{\nu}^{(0)}(z)
=K_{\nu }(z),
\end{equation}
and
\begin{equation} 
\label{eq3.14}
\mathcal{L}_{\nu}^{(\pm 1)}(z)
=K_{\nu}\left ( ze^{\pm \pi i } \right ).
\end{equation}

Here we aim for $A(u,z)$ and $B(u,z)$ to be slowly-varying functions of $u$ and analytic in a domain that contains $z=1$. With this in mind, we \emph{define} $A(u,z)$ and $B(u,z)$ by the pair of equations
\begin{multline}
\label{eq3.19}
D_{n,\pm 1}^{(\lambda)}(z)
=\frac{\sqrt{u\pi} 
(\lambda)_{n}}{2^{\nu} n!}
\left(z^2-1\right)^{-(\nu+1)/2} w_{\pm 1}(u,z)
\\
=\frac{\sqrt{u\pi} 
(\lambda)_{n}}{2^{\nu} n!}
\xi^{1/2}
\left(z^2-1\right)^{-(2\nu+1)/4} 
\\ \times
\left\{K_{\nu}\left ( u \xi e^{\pm \pi i } \right ) A(u,z) 
+ K_{\nu+1}\left(u \xi e^{\pm \pi i } \right ) B(u,z) \right\},
\end{multline}
where the factor $(z^2-1)^{-(\nu+1)/2}$ comes from (\ref{eq2.1}), and the multiplicative constant has been introduced for later convenience, but must be the same for both functions, as we see next.

Note that these are exact expressions, and so from these two equations we can use connection formulas to obtain corresponding representations for all other solutions of (\ref{eq1.2}). Firstly, using (\ref{eq1.18}) and (\ref{eq2.33}) we get from (\ref{eq3.19})
\begin{multline}
\label{eq3.22}
D_{n}^{(\lambda)}(z)
=\frac{\sqrt{u\pi} 
(\lambda)_{n}}{2^{\nu} n!}
\xi^{1/2}
\left(z^2-1\right)^{-(2\nu+1)/4} 
 \\ \times 
\left\{K_{\nu}(u \xi) A(u,z) 
-K_{\nu+1}(u \xi)B(u,z) \right\}.
\end{multline}

Next, using \cite[Sect. 34]{NIST:DLMF}, 
\begin{equation}
\label{eq3.23}
\cos(\nu \pi) I_{\nu }(z) =
\frac{i }{2\pi}  
\left\{
e^{\nu \pi i } K_{\nu}\left ( ze^{\pi i } \right )
- e^{-\nu \pi i } K_{\nu}\left ( ze^{-\pi i } \right )
\right\},
\end{equation}
and (\ref{eq1.19}) yields
\begin{multline}
\label{eq3.24}
C_{n}^{(\lambda)}(z)
=\frac{\sqrt{u\pi} 
(\lambda)_{n}}{2^{\nu} n!}
\xi^{1/2}
\left(z^2-1\right)^{-(2\nu+1)/4} 
 \\ \times
\left\{I_{\nu}(u \xi) A(u,z) 
+I_{\nu+1}(u \xi)B(u,z) \right\}.
\end{multline}
Similarly, we use (\ref{eq3.12}), (\ref{eq3.19}), and (\ref{eq3.23}) with $\nu$ replaced by $-\nu$, and we arrive at
\begin{multline}
\label{eq3.26}
\hat{C}^{(\lambda)}_{n}(z)
=\frac{\sqrt{u \pi}\,
(\lambda)_{n}}{2^{\nu} n!}
\xi^{1/2}
\left(z^2-1\right)^{-(2\nu+1)/4}
\\ \times
\left\{I_{-\nu}(u \xi) A(u,z) 
+I_{-\nu-1}(u \xi)B(u,z) \right\}.
\end{multline}

We shall make use of the following Wronskian relations, found from \cite[Sects. 10.28 and 10.34]{NIST:DLMF}, to aid in finding representations for the coefficient functions. These are given by
\begin{equation} 
\label{eq3.16}
\mathcal{L}_{\nu}^{(1)}(z)\mathcal{L}_{\nu+1}^{(-1)}(z)
-\mathcal{L}_{\nu}^{(-1)}(z)\mathcal{L}_{\nu+1}^{(1)}(z)
=2\pi i\cos(\nu \pi)/z,
\end{equation}
\begin{equation} 
\label{eq3.18}
\mathcal{L}_{\nu}^{(0)}(z)
\mathcal{L}_{\nu+1}^{(\pm 1)}(z)
+\mathcal{L}_{\nu+1}^{(0)}(z)
\mathcal{L}_{\nu}^{(\pm 1)}(z)
=\mp i\pi/z,
\end{equation}
and
\begin{equation} 
\label{eq3.17}
I_{\nu}(z)I_{-\nu-1}(z)-I_{\nu+1}(z)I_{-\nu}(z)
=-2\sin(\nu\pi)/(\pi z).
\end{equation}

Explicit representations for both coefficient functions can now be established by choosing any pair of equations from (\ref{eq3.19}) - (\ref{eq3.26}) and solving for $A(u,z)$ and $B(u,z)$. The choice of equations depends in which part of the complex plane one is considering. In the region under consideration we choose the pair of equations that involve numerically satisfactory solutions to (\ref{eq1.2}).

To this end, consider the first quadrant. Here a numerically satisfactory pair of solutions is $D_{n}^{(\lambda)}(z)$ and $D_{n,-1}^{(\lambda)}(z)$, provided points close to $z=1$ are excluded at this time. Thus we use (\ref{eq3.19}), (\ref{eq3.22}) and (\ref{eq3.18}) to obtain
\begin{multline}
\label{eq3.29}
A(u,z)=
-\frac{i 2^{\nu} n!}{\pi^{3/2}(\lambda)_{n}}
(u\xi)^{1/2}
\left(z^2-1\right)^{(2\nu+1)/4}  \\ \times
\left\{ D_{n}^{(\lambda)}(z)
K_{\nu+1}\left ( u \xi e^{-\pi i } \right )
+D_{n,-1}^{(\lambda)}(z)K_{\nu+1}(u \xi)\right\},
\end{multline}
and
\begin{multline}
\label{eq3.29a}
B(u,z)=
\frac{i 2^{\nu} n!}{\pi^{3/2}(\lambda)_{n}}
(u\xi)^{1/2}
\left(z^2-1\right)^{(2\nu+1)/4} \\ \times
\left\{
D_{n}^{(\lambda)}(z) 
K_{\nu}\left(u \xi e^{-\pi i } \right)
-D_{n,-1}^{(\lambda)}(z)K_{\nu}(u \xi)
\right\}.
\end{multline}

Similarly, in fourth quadrant we obtain the numerically satisfactory representations
\begin{multline}
\label{eq3.27}
A(u,z)=
\frac{i 2^{\nu} n!}{\pi^{3/2}(\lambda)_{n}}
(u\xi)^{1/2}
\left(z^2-1\right)^{(2\nu+1)/4} \\ \times
\left\{ D_{n}^{(\lambda)}(z) 
K_{\nu+1}\left ( u \xi e^{\pi i } \right )
+D_{n,1}^{(\lambda)}(z)K_{\nu+1}(u \xi)\right\},
\end{multline}
and
\begin{multline}
\label{eq3.28}
B(u,z)=
-\frac{i 2^{\nu} n!}{\pi^{3/2}(\lambda)_{n}}
(u\xi)^{1/2}
\left(z^2-1\right)^{(2\nu+1)/4} \\ \times
\left\{
D_{n}^{(\lambda)}(z) 
K_{\nu}\left(u \xi e^{\pi i } \right)
-D_{n,1}^{(\lambda)}(z)K_{\nu}(u \xi)
\right\}.
\end{multline}

These are our main representations which we shall use shortly to construct asymptotic expansions. Before doing so, other choice of pairs of equations are possible, and to prove analyticity we find from (\ref{eq3.24}), (\ref{eq3.26}) and (\ref{eq3.17}) that
\begin{multline}
\label{eq3.30}
\sin(\nu \pi) A(u,z)=
\frac{2^{\nu-1} n!}{(\lambda)_{n}}
(\pi u \xi)^{1/2}
\left(z^2-1\right)^{(2\nu+1)/4}  \\ \times
\left\{ \hat{C}_{n}^{(\lambda)}(z)I_{\nu+1}(u\xi)
-C_{n}^{(\lambda)}(z)I_{-\nu-1}(u \xi)\right\},
\end{multline}
and
\begin{multline}
\label{eq3.30a}
\sin(\nu \pi) B(u,z)=
-\frac{2^{\nu-1} n!}{(\lambda)_{n}}
(\pi u \xi)^{1/2}
\left(z^2-1\right)^{(2\nu+1)/4}  \\ \times
\left\{ \hat{C}_{n}^{(\lambda)}(z)I_{\nu}(u\xi)
-C_{n}^{(\lambda)}(z)I_{-\nu}(u \xi)\right\}.
\end{multline}

Assuming temporarily that $\nu$ is not an integer, using (\ref{eq1.1}), (\ref{eq3.8}), (\ref{eq3.1}), (\ref{eq3.2}), (\ref{eq3.30}) and \cite[Eq. 10.25.2]{NIST:DLMF} one can construct a convergent power series for $A(u,z)$ at $z=1$, demonstrating that it is analytic at that point. The restriction that $\nu$ be an integer can be relaxed by a limit argument. Similarly $B(u,z)$ can also shown to be analytic at $z=1$ for all $\nu$ from (\ref{eq3.30a}).

We also note
\begin{multline}
\label{eq3.31}
A(u,z)=
\frac{2^{\nu} n!}{(\lambda)_{n}}
\left(\frac{u\xi}{\pi}\right)^{1/2}
\left(z^2-1\right)^{(2\nu+1)/4}  \\ \times
\left\{ C_{n}^{(\lambda)}(z)K_{\nu+1}(u\xi)
+D_{n}^{(\lambda)}(z)I_{\nu+1}(u \xi)\right\},
\end{multline}
and
\begin{multline}
\label{eq3.31b}
B(u,z)=
\frac{2^{\nu} n!}{(\lambda)_{n}}
\left(\frac{u\xi}{\pi}\right)^{1/2}
\left(z^2-1\right)^{(2\nu+1)/4}  \\ \times
\left\{ C_{n}^{(\lambda)}(z)K_{\nu}(u\xi)
-D_{n}^{(\lambda)}(z)I_{\nu}(u \xi)\right\},
\end{multline}
which, in conjunction with their analyticity at $z=1$, show both of these functions are real for $z \in (-1,\infty)$.

Let us now use the results of \cref{sec2} to derive the desired expansions for $A(u,z)$ and $B(u,z)$. From (\ref{eq2.19}), (\ref{eq2.21}), (\ref{eq2.22}), (\ref{eq2.30}), (\ref{eq2.32a}), (\ref{eq3.29}) and (\ref{eq3.29a}) we obtain asymptotic expansions which are uniformly valid for all points in the first quadrant, except for an arbitrarily small neighborhood of $z=1$. Thus as $u \rightarrow \infty$ we arrive at
\begin{multline}
\label{eq3.32}
A(u,z) \sim 
\frac{\Gamma(u+\lambda) n!}
{\Gamma(u)\Gamma(u+1)}
\exp \left\{
\sum\limits_{s=1}^{\infty}
\frac{\tilde{\mathcal{E}}_{2s}(\nu,z)}{u^{2s}}
+\sum\limits_{s=1}^{\infty}(-1)^{s+1}
\frac{\tilde{E}_{s}(1)}{u^{s}}
\right\}
\\ \times
\cosh \left\{ \sum\limits_{s=0}^{\infty }
\frac{\tilde{\mathcal{E}}_{2s+1}(\nu,z)}{u^{2s+1}}
\right\},
\end{multline}
and
\begin{multline}
\label{eq3.33}
B(u,z) \sim 
\frac{\Gamma(u+\lambda) n!}
{\Gamma(u)\Gamma(u+1)}
\exp \left\{
\sum\limits_{s=1}^{\infty}
\frac{\mathcal{E}_{2s}(\nu,z)}{u^{2s}}
+\sum\limits_{s=1}^{\infty}(-1)^{s+1}
\frac{\tilde{E}_{s}(1)}{u^{s}}
\right\}
\\ \times
\sinh \left\{ \sum\limits_{s=0}^{\infty }
\frac{\mathcal{E}_{2s+1}(\nu,z)}{u^{2s+1}}
\right\},
\end{multline}
where for $s=1,2,3,\ldots$
\begin{equation}
\label{eq3.34}
\mathcal{E}_{s}(\nu,z)
=\tilde{E}_{s}(\beta)+(-1)^{s+1}\frac{a_{s}(\nu)}
{s\xi^{s}},
\end{equation}
and
\begin{equation}
\label{eq3.34a}
\tilde{\mathcal{E}}_{s}(\nu,z)
=\tilde{E}_{s}(\beta)+(-1)^{s+1}\frac{a_{s}(\nu+1)}
{s\xi^{s}}.
\end{equation}

From \cref{lemMero} and (\ref{eq2.6}) (see also (\ref{eq3.1}) and (\ref{eq3.2})) we have the following important result.
\begin{lemma}
\label{lemMero2}
$(z-1)^{1/2}\mathcal{E}_{2s+1}(\nu,z)$, $(z-1)^{1/2}\tilde{\mathcal{E}}_{2s+1}(\nu,z)$, $\mathcal{E}_{2s+2}(\nu,z)$ and $\tilde{\mathcal{E}}_{2s+2}(\nu,z)$ ($s=0,1,2,\ldots$) are meromorphic functions at $z=1$. 
\end{lemma}

If we use (\ref{eq3.27}) and (\ref{eq3.28}) in place of (\ref{eq3.29}) and (\ref{eq3.29a}) we obtain exactly the same expansions, but now also valid in the fourth quadrant, again with a neighborhood of $z=1$ removed. Thus the expansions hold uniformly for all points in the right half plane, except for a neighborhood of $z=1$.

Let us summarize our main results. In doing so we use (\ref{eq2.27}) to replace the series involving $\tilde{E}_{s}(1)$ in (\ref{eq3.32}) and (\ref{eq3.33}) with gamma functions.
\begin{theorem}
Let $u=\lambda + n$ and $\nu = \lambda -\frac{1}{2}$, and $\beta$ and $\xi$ be given by (\ref{eq2.12}) and (\ref{eq2.13}), respectively. Then the Gegenbauer polynomial $C_{n}^{(\lambda)}(z)$ can be expressed by
\begin{multline}
\label{eq3.24a}
C_{n}^{(\lambda)}(z)
=\frac{\sqrt{u\pi} 
(\lambda)_{n}}{2^{\nu} n!}
\xi^{1/2}
\left(z^2-1\right)^{-(2\nu+1)/4} 
 \\ \times
\left\{I_{\nu}(u \xi) A(u,z) 
+I_{\nu+1}(u \xi)B(u,z) \right\},
\end{multline}
with corresponding representations for companion solutions to (\ref{eq1.2}) given by (\ref{eq3.19}), (\ref{eq3.22}) and (\ref{eq3.26}). In these the fractional powers take positive values when $z>1$ ($\xi >0$) and are defined by continuity elsewhere. The functions $A(u,z)$ and $B(u,z)$ are analytic for $\Re(z) \geq 0$, and as $u \rightarrow \infty$, uniformly for $\Re(z) \geq 0$ and $|z-1| \geq \delta$ ($0< \delta \leq 1$), possess the asymptotic expansions
\begin{multline}
\label{eq3.35}
A(u,z) \sim 
\frac{2^{\lambda - 1}
\Gamma\left(\frac{1}{2}u+\frac{1}{2}\lambda\right)n!}
{\Gamma(u)
\Gamma\left(\frac{1}{2}u - \frac{1}{2}\lambda + 1\right)}
\exp \left\{
\sum\limits_{s=1}^{\infty}
\frac{\tilde{\mathcal{E}}_{2s}(\nu,z)}{u^{2s}}
\right\}
\\ \times
\cosh \left\{ \sum\limits_{s=0}^{\infty }
\frac{\tilde{\mathcal{E}}_{2s+1}(\nu,z)}{u^{2s+1}}
\right\},
\end{multline}
and
\begin{multline}
\label{eq3.36}
B(u,z) \sim 
\frac{2^{\lambda - 1}
\Gamma\left(\frac{1}{2}u+\frac{1}{2}\lambda\right)n!}
{\Gamma(u)
\Gamma\left(\frac{1}{2}u - \frac{1}{2}\lambda + 1\right)}
\exp \left\{
\sum\limits_{s=1}^{\infty}
\frac{\mathcal{E}_{2s}(\nu,z)}{u^{2s}}
\right\}
\\ \times
\sinh \left\{ \sum\limits_{s=0}^{\infty }
\frac{\mathcal{E}_{2s+1}(\nu,z)}{u^{2s+1}}
\right\}.
\end{multline}
The coefficients $\mathcal{E}_{s}(\nu,z)$ and $\tilde{\mathcal{E}}_{s}(\nu,z)$ are given by (\ref{eq3.34}) and (\ref{eq3.34a}), where $\tilde{E}_{s}(\beta)$ are defined by (\ref{eq2.14}) - (\ref{eq2.17}), and $a_{s}(\nu)$ are defined by (\ref{eq2.31}) and (\ref{eq2.32}). 
\end{theorem}

The first two coefficients in (\ref{eq3.35}) and (\ref{eq3.36}) are given by
\begin{equation}
\label{eq3.36a}
\mathcal{E}_{1}(\nu,z)
=\frac{4\nu^{2}-1}{8 \xi}
-\frac{\left (4\nu^2-1\right)\beta}{8},
\end{equation}
\begin{equation}
\label{eq3.36c}
\mathcal{E}_{2}(\nu,z)
=-\frac{4\nu^{2}-1}{16 \xi^{2}}
-\frac{\left (4\nu^2-1\right)\beta^2}{16},
\end{equation}
\begin{equation}
\label{eq3.36b}
\tilde{\mathcal{E}}_{1}(\nu,z)
=\frac{4(\nu+1)^{2}-1}{8 \xi}
-\frac{\left (4\nu^2-1\right)\beta}{8},
\end{equation}
and
\begin{equation}
\label{eq3.36d}
\tilde{\mathcal{E}}_{2}(\nu,z)
=-\frac{4(\nu+1)^{2}-1}{16 \xi^{2}}
-\frac{\left (4\nu^2-1\right)\beta^2}{16}.
\end{equation}

Extension of these results to $|z-1| \leq \delta$ will be discussed in the next two sections.

\section{Expansions for Gegenbauer polynomials with real argument $z \in [0,1]$}
\label{sec4}

Here we specialize results for the previous section to the real interval $[0,1]$ where the Gegenbauer polynomials have their zeros in the right half plane; the interval $[-1,0]$ containing the other zeros is simply covered by (\ref{eq2.22b}). Now this interval lies on the branch cut for the multi-valued functions and variables we have studied, and we only need to consider the upper part since obviously the polynomials themselves take the same value above and below.

Thus, let $z=x+i 0$ where $0 \leq x=\cos(\theta) <1$. Then we have from (\ref{eq2.6}) and (\ref{eq2.12})
\begin{equation}
\label{eq4.1}
\beta=-i\cot(\theta),
\end{equation}
and
\begin{equation}
\label{eq4.2}
\xi=i \theta.
\end{equation}

Now from \cite[Eq. 10.27.6]{NIST:DLMF}
\begin{equation*}
\label{eq4.3}
I_{\nu}(iz)=e^{\nu \pi i/2}J_{\nu }(z),
\end{equation*}
and so from this, (\ref{eq3.34}) - (\ref{eq3.36}), (\ref{eq4.1}) and (\ref{eq4.2}), we obtain
\begin{multline}
\label{eq4.4}
C_{n}^{(\lambda)}(\cos(\theta))
=\sqrt{\frac{u\pi}{2}}\,
\frac{ \Gamma\left(\frac{1}{2}u+\frac{1}{2}\lambda\right)
\theta^{1/2}}
{\Gamma\left(\frac{1}{2}u-\frac{1}{2}\lambda+1\right)
\Gamma(\lambda)
\{\sin(\theta) \}^{\lambda}}
\\ \times
\left\{J_{\nu}(u \theta) \hat{A}(u,\theta) 
-J_{\nu+1}(u \theta)\hat{B}(u,\theta) \right\},
\end{multline}
where
\begin{multline}
\label{eq4.5}
\hat{A}(u,\theta) =
\frac{\Gamma(u)
\Gamma\left(\frac{1}{2}u - \frac{1}{2}\lambda + 1\right)}
{2^{\lambda - 1}
\Gamma\left(\frac{1}{2}u+\frac{1}{2}\lambda\right)n!}
A(u,\cos(\theta))  
\\ 
\sim \exp \left\{
\sum\limits_{s=1}^{\infty}
\frac{\tilde{\mathbf{E}}_{2s}(\nu,\theta)}{u^{2s}}
\right\}
\cos \left\{ \sum\limits_{s=0}^{\infty }
\frac{\tilde{\mathbf{E}}_{2s+1}(\nu,\theta)}{u^{2s+1}}
\right\},
\end{multline}
and
\begin{multline}
\label{eq4.6}
\hat{B}(u,\theta)
=-i \frac{\Gamma(u)
\Gamma\left(\frac{1}{2}u - \frac{1}{2}\lambda + 1\right)}
{2^{\lambda - 1}
\Gamma\left(\frac{1}{2}u+\frac{1}{2}\lambda\right)n!}
B(u,\cos(\theta))
\\  \sim
\exp \left\{
\sum\limits_{s=1}^{\infty}
\frac{\mathbf{E}_{2s}(\nu,\theta)}{u^{2s}}
\right\}
\sin \left\{ \sum\limits_{s=0}^{\infty }
\frac{\mathbf{E}_{2s+1}(\nu,\theta)}{u^{2s+1}}
\right\},
\end{multline}
as $u \rightarrow \infty$ with $0<\delta \leq \theta \leq \pi/2$. Here
\begin{equation}
\label{eq4.7}
\mathbf{E}_{2s}(\nu,\theta)
=\tilde{E}_{2s}(i\cot(\theta))
+(-1)^{s+1}\frac{a_{2s}(\nu)}
{2 s\theta^{2s}},
\end{equation}
and
\begin{equation}
\label{eq4.8}
\mathbf{E}_{2s+1}(\nu,\theta)
=i\tilde{E}_{2s+1}(i\cot(\theta))
+(-1)^{s+1}\frac{a_{2s+1}(\nu)}
{(2s+1)\theta^{2s+1}},
\end{equation}
with $\tilde{\mathbf{E}}_{s}(\nu,\theta)$ defined the same way except with $a_{s}(\nu)$ replaced by $a_{s}(\nu+1)$.

We now re-expand $\hat{A}(u,\theta)$ and $\hat{B}(u,\theta)$ as standard asymptotic expansions in inverse powers of $u$. To do so, first consider the formal expansion
\begin{equation}
\label{eq4.9}
\exp \left\{
\sum\limits_{s=1}^{\infty}
\frac{\iota_{s}\tilde{\mathbf{E}}_{s}(\nu,\theta)}
{u^{s}}
\right\}
\sim 1+ \sum\limits_{s=1}^{\infty}
\frac{\iota_{s}\mathrm{A}_{s}(\theta)}{u^{s}},
\end{equation}
where $\iota_{s}=1$ for $s$ even and $\iota_{s}=i$ for $s$ odd. The real part of both sides is equivalent to the asymptotic expansion in (\ref{eq4.5}).

To determine the coefficients $\mathrm{A}_{s}(\theta)$ we differentiate (\ref{eq4.9}) with respect to $u$, and then replace the residual exponential term with its original expansion from (\ref{eq4.9}). As a result we obtain the formal expansion
\begin{equation*}
\label{eq4.10}
\sum\limits_{s=1}^{\infty}
\frac{\iota_{s}s\mathrm{A}_{s}(\theta)}{u^{s+1}}
\sim
\sum\limits_{s=1}^{\infty}
\frac{\iota_{s}s\tilde{\mathbf{E}}_{s}(\nu,\theta)}
{u^{s+1}}
 \left\{1+ \sum\limits_{s=1}^{\infty}
\frac{\iota_{s}\mathrm{A}_{s}(\theta)}{u^{s}}\right\}.
\end{equation*}

We then use the Cauchy product of two infinite series \cite[Sect. 73]{Brown:2014:CVA}, and therefore we find that $\mathrm{A}_{1}(\theta)=\tilde{\mathbf{E}}_{1}(\nu,\theta)$, and for $s=2,3,4,\ldots$
\begin{equation}
\label{eq4.11}
\mathrm{A}_{s}(\theta) =
\tilde{\mathbf{E}}_{s}(\nu,\theta)
+\frac{1}{s}\sum_{j=1}^{s-1}\iota_{s,j}
j\tilde{\mathbf{E}}_{j}(\nu,\theta)
\mathrm{A}_{s-j}(\theta),
\end{equation}
where
\begin{equation}
\label{eq4.12a}
\iota_{s,j}=\frac{\iota_{j}\iota_{s-j}}{\iota_{s}}
=
    \begin{cases}
        -1 & \text{if } s \,\, \text{is even and}\,\,
        j \,\, \text{is odd}\\
        1 & \text{otherwise}
    \end{cases}.
\end{equation}

From (\ref{eq1.5}), (\ref{eq4.5}) and (\ref{eq4.9}) we then obtain our desired expansion
\begin{equation}
\label{eq4.12}
\hat{A}(u,\theta) \sim 
\sum\limits_{s=0}^{\infty}
\frac{A_{s}(\theta)}{u^{2s}},
\end{equation}
where $A_{0}(\theta)=1$, and  $A_{s}(\theta)=\mathrm{A}_{2s}(\theta)$ ($s=1,2,3,\ldots$). These coefficients are easily determined recursively using (\ref{eq2.14}) - (\ref{eq2.17}), (\ref{eq2.31}), (\ref{eq2.32}), (\ref{eq4.7}), (\ref{eq4.8}), (\ref{eq4.11}) and (\ref{eq4.12a}). For example, 
\begin{equation}
\label{eq4.13}
A_{1}(\theta)=
-\frac{\lambda \left( \lambda ^{2}-1 \right )}
{8\theta^2}
\left\{ (\lambda - 2)\theta^2\cot^{2}(\theta) - 
2\lambda\theta \cot(\theta) + \lambda + 2\right\},
\end{equation}
and
\begin{equation}
\label{eq4.14}
A_{2}(\theta)=\frac{\lambda 
\left( \lambda ^{2}-1 \right )(\lambda-2)}{384\,\theta^4}
\sum_{j=0}^{4} \alpha_{2,j}(\theta)\theta^j \cot^{j}(\theta),
\end{equation}
where
\begin{equation}
\label{eq4.15}
\alpha_{2,0}(\theta)
=(\lambda + 2)(\lambda + 4)(\lambda^2 - 9),
\end{equation}
\begin{equation}
\label{eq4.16}
\alpha_{2,1}(\theta)
=4 \lambda
\left\{6(\lambda + 1)\theta^2 - 
(\lambda - 1)(\lambda + 2)(\lambda + 3)\right\},
\end{equation}
\begin{equation}
\label{eq4.17}
\alpha_{2,2}(\theta)
=6\left\{4(4-\lambda^2 +\lambda)\theta^2 + 
\lambda(\lambda^2 - 1)(\lambda + 2)\right\},
\end{equation}
\begin{equation}
\label{eq4.18}
\alpha_{2,3}(\theta)
=-4\lambda(\lambda + 1)
(\lambda + 2)(\lambda - 3),
\end{equation}
and
\begin{equation}
\label{eq4.19}
\alpha_{2,4}(\theta)
=(\lambda + 2)(\lambda - 4)(\lambda^2 - 9).
\end{equation}

Following similar steps, if
\begin{equation}
\label{eq4.19a}
\exp \left\{
\sum\limits_{s=1}^{\infty}
\frac{\iota_{s}\mathbf{E}_{s}(\nu,\theta)}{u^{s}}
\right\}
\sim 1+\sum\limits_{s=1}^{\infty}
\frac{\iota_{s}\mathrm{B}_{s}(\theta)}{u^{s}},
\end{equation}
then we find that $\mathrm{B}_{1}(\theta)=\mathbf{E}_{1}(\nu,\theta)$ and for $s=2,3,4,\ldots$
\begin{equation}
\label{eq4.20}
\mathrm{B}_{s}(\theta) =
\mathbf{E}_{s}(\nu,\theta)
+\frac{1}{s}\sum_{j=1}^{s-1}\iota_{s,j}
j\mathbf{E}_{j}(\nu,\theta)
\mathrm{B}_{s-j}(\theta).
\end{equation}

Defining $B_{s}(\theta)=\mathrm{B}_{2s+1}(\theta)$ ($s=0,1,2,\ldots$), taking imaginary parts of both sides of (\ref{eq4.19a}) and comparing to (\ref{eq4.6}), we then obtain
\begin{equation}
\label{eq4.21}
\hat{B}(u,\theta) \sim 
\sum\limits_{s=0}^{\infty}
\frac{B_{s}(\theta)}{u^{2s+1}}.
\end{equation}

Again we can readily compute these terms recursively, and for the first two find that
\begin{equation}
\label{eq4.22}
B_{0}(\theta)=
\frac{\lambda (\lambda-1)}{2\theta}
\left\{ \theta\cot(\theta) - 1\right\},
\end{equation}
and 
\begin{equation}
\label{eq4.23}
B_{1}(\theta)=\frac{\lambda 
\left( \lambda ^{2}-1 \right )(\lambda-2)}{48\,\theta^3}
\sum_{j=0}^{3} \beta_{1,j}(\theta)\theta^j \cot^{j}(\theta),
\end{equation}
where
\begin{equation}
\label{eq4.24}
\beta_{1,0}(\theta)=-\beta_{1,3}(\theta)
=(\lambda + 2)(\lambda-3),
\end{equation}
\begin{equation}
\label{eq4.25}
\beta_{1,1}(\theta)
=3(2\theta^2-\lambda^2+\lambda),
\end{equation}
and
\begin{equation}
\label{eq4.26}
\beta_{1,2}(\theta)
=3\lambda(\lambda- 1).
\end{equation}

We next use the following theorem \cite[Thm. 3.1]{Dunster:2021:NKF} to establish that the coefficients in (\ref{eq4.12}) and (\ref{eq4.21}) are analytic at $\theta=0$ ($z=1$), and moreover these expansions are uniformly valid for $0 \leq \theta \leq \pi/2$.
\begin{theorem} 
\label{eqthm:nopoles}
Let $0<\rho_{1}<\rho_{2}$, $u>0$, $z_{0}\in \mathbb{C}$, $H(u,z)$ be an analytic function of $z$ in the open disk $D=\{z:\, |z-z_{0}|<\rho_{2}\}$, and $h_{s}(z)$ ($s=0,1,2,\ldots$) be a sequence of functions that are analytic in $D$ except possibly for an isolated singularity at $z=z_{0}$. If $H(u,z)$ is known to possess the asymptotic expansion
\begin{equation} 
\label{eq4.27}
H(u,z) \sim \sum\limits_{s=0}^{\infty}
\frac{h_{s}(z)}{u^{s}}
\quad (u \rightarrow \infty),
\end{equation}
in the annulus $\rho_{1}<|z-z_{0}|<\rho_{2}$, then $z_{0}$ is an ordinary point or a removable singularity for each $h_{s}(z)$, and the expansion (\ref{eq4.27}) actually holds for all $z \in D$ (with $h_{s}(z_{0})$ defined by the limit of this function at $z_{0}$ if it is a removable singularity).
\end{theorem}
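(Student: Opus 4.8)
The plan is to exploit the asymmetry in the hypotheses: $H(u,\cdot)$ is analytic on the \emph{entire} disk $D$, whereas the coefficients $h_{s}$ are known to be analytic only on the punctured disk. I would recover the coefficients from $H$ itself by means of a Cauchy integral over a circle lying strictly inside the annulus, observe that the resulting integral representations are manifestly analytic across $z_{0}$, and then match them with the given $h_{s}$ by uniqueness of asymptotic expansions. This matching forces the singularity at $z_{0}$ to be removable and simultaneously propagates the expansion inward to fill the whole disk.

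More precisely, fix $r$ with $\rho_{1}<r<\rho_{2}$ and let $\Gamma$ denote the circle $|t-z_{0}|=r$, a compact subset of the annulus. For any $z$ with $|z-z_{0}|<r$, Cauchy's formula applied to the analytic function $H(u,\cdot)$ gives $H(u,z)=(2\pi i)^{-1}\oint_{\Gamma}H(u,t)(t-z)^{-1}\,dt$. Substituting the hypothesized expansion $H(u,t)=\sum_{s=0}^{N-1}h_{s}(t)u^{-s}+R_{N}(u,t)$, valid on $\Gamma$, and integrating term by term, I obtain $H(u,z)=\sum_{s=0}^{N-1}\tilde{h}_{s}(z)u^{-s}+E_{N}(u,z)$, where $\tilde{h}_{s}(z)=(2\pi i)^{-1}\oint_{\Gamma}h_{s}(t)(t-z)^{-1}\,dt$ and $E_{N}$ is the Cauchy transform of $R_{N}$. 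Each $\tilde{h}_{s}$ is a Cauchy-type integral, hence analytic for every $z$ interior to $\Gamma$, in particular at $z_{0}$; and since $|t-z|\ge r-|z-z_{0}|>0$ on $\Gamma$, the bound $R_{N}=O(u^{-N})$ forces $E_{N}(u,z)=O(u^{-N})$, uniformly on compact subsets of $\{|z-z_{0}|<r\}$.

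With this in hand the conclusion follows from two standard facts. On the overlap annulus $\rho_{1}<|z-z_{0}|<r$ the two asymptotic expansions $\sum_{s}h_{s}(z)u^{-s}$ and $\sum_{s}\tilde{h}_{s}(z)u^{-s}$ represent the same function $H(u,z)$, so by uniqueness of the coefficients of an asymptotic power series in $u^{-1}$ I get $\tilde{h}_{s}\equiv h_{s}$ there. Since $\tilde{h}_{s}$ is analytic on the full disk $\{|z-z_{0}|<r\}$ while $h_{s}$ is analytic on the punctured disk $\{0<|z-z_{0}|<r\}$, and the two agree on the nonempty open annulus, the identity theorem yields $h_{s}=\tilde{h}_{s}$ throughout $\{0<|z-z_{0}|<r\}$. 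Thus $h_{s}$ coincides with a function analytic at $z_{0}$, so $z_{0}$ is a removable singularity (or already an ordinary point), and setting $h_{s}(z_{0})=\tilde{h}_{s}(z_{0})$ extends $h_{s}$ analytically. Finally, rewriting the derived expansion as $H(u,z)=\sum_{s=0}^{N-1}h_{s}(z)u^{-s}+O(u^{-N})$ for every $z$ with $|z-z_{0}|<r$ shows the expansion holds on that disk, and letting $r\uparrow\rho_{2}$ extends it to all of $D$.

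The step I expect to require the most care is the uniformity needed to integrate the remainder: the Cauchy-integral argument produces an $O(u^{-N})$ error only if $R_{N}(u,t)=O(u^{-N})$ holds uniformly for $t$ on the contour $\Gamma$. This is precisely where I would invoke the reading of the hypothesis in which the asymptotic expansion is valid uniformly on compact subsets of the annulus, so that it holds uniformly on the compact circle $\Gamma$; with merely pointwise validity in $z$ the term-by-term integration could not be justified. Everything else—analyticity of the Cauchy transform, uniqueness of the asymptotic coefficients, and the identity theorem—is routine.
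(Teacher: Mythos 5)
Your argument is correct. Note that the paper itself does not prove this theorem---it is quoted verbatim from \cite[Thm.~3.1]{Dunster:2021:NKF}---but your proof is essentially the argument used there and the standard one for results of this type: represent $H(u,\cdot)$ by Cauchy's formula over a circle $|t-z_0|=r$ inside the annulus, integrate the expansion term by term to produce coefficients $\tilde h_s$ that are automatically analytic across $z_0$, identify $\tilde h_s$ with $h_s$ on the overlap annulus by uniqueness of the coefficients of an asymptotic power series in $u^{-1}$, and conclude by the identity theorem. You are also right that the only delicate point is uniformity of the remainder on the contour: with merely pointwise validity in $z$ the Cauchy transform of $R_N$ need not be $O(u^{-N})$, and indeed the hypothesis is intended (and is satisfied in the application to (\ref{eq3.35}) and (\ref{eq4.27b}), where the underlying LG and Bessel error bounds are uniform on compact subsets of the annulus) in the uniform sense. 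One small remark: the pointwise identity $\tilde h_s=h_s$ on the overlap cannot come from Cauchy's formula applied to $h_s$ itself (which would only reproduce the regular part of its Laurent series); it must come, as you say, from uniqueness of the asymptotic coefficients---your write-up gets this right.
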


This is applied to (\ref{eq3.35}) with
\begin{equation*}
\label{eq4.27a}
H(u,z)=
\frac{\Gamma(u)
\Gamma\left(\frac{1}{2}u - \frac{1}{2}\lambda + 1\right)}
{2^{\lambda - 1}
\Gamma\left(\frac{1}{2}u
+\frac{1}{2}\lambda\right)n!} A(u,z),
\end{equation*}
and the coefficients $h_{s}(z)$ given by the formal expansion
\begin{equation*}
\label{eq4.27b}
\exp \left\{
\sum\limits_{s=1}^{\infty}
\frac{\tilde{\mathbf{E}}_{2s}(\nu,z)}{u^{2s}}
\right\}
\cosh \left\{ \sum\limits_{s=0}^{\infty }
\frac{\tilde{\mathbf{E}}_{2s+1}(\nu,z)}{u^{2s+1}}
\right\}
\sim \sum\limits_{s=0}^{\infty}\frac{h_{s}(z)}{u^{s}}
\quad (u \rightarrow \infty).
\end{equation*}

All odd terms $h_{2s+1}(z)$ ($s=0,1,2,\ldots$) are zero in this notation, and the even terms $h_{2s}(z)$ ($s=0,1,2,\ldots$) are all seen to have an isolated singularity at $z=1$ by virtue of \cref{lemMero2}. We then have in the notation of the theorem $z_0=1$, $\rho_1>0$ arbitrarily small, and $\rho_2=1$, and so we deduce that the singularity of each coefficient is removable. By uniqueness of asymptotic expansions we see from (\ref{eq4.5}) and (\ref{eq4.12}) that $A_s(\theta)=h_{2s}(\cos(\theta))$ ($s=0,1,2,\ldots$), establishing that the expansion (\ref{eq4.12}) holds for $0 \leq \theta \leq \pi/2$, assuming of course that $A_{s}(0)=\lim_{\theta \rightarrow 0}A_{s}(\theta)$. Similarly we can prove the corresponding result for (\ref{eq4.21}).

To illustrate, we calculate from (\ref{eq4.13}) - (\ref{eq4.19}) as $\theta \rightarrow 0$ ($x = \cos(\theta) \rightarrow 1$)
\begin{equation}
\label{eq4.28}
A_{1}(\theta)=
-\tfrac{1}{6}\lambda\left(\lambda^2-1\right)
+\mathcal{O}\left(\theta^2\right),
\end{equation}
\begin{equation}
\label{eq4.29}
A_{2}(\theta)=
\tfrac{1}{360}\lambda(\lambda^2 - 1)
(\lambda - 2)(\lambda-3)(5\lambda+7)
+\mathcal{O}\left(\theta^2\right),
\end{equation}
and from (\ref{eq4.22}) - (\ref{eq4.26})
\begin{equation}
\label{eq4.30}
B_{0}(\theta)=
-\tfrac{1}{6}\lambda(\lambda - 1)\theta
+\mathcal{O}\left(\theta^3\right),
\end{equation}
\begin{equation}
\label{eq4.31}
B_{1}(\theta)=
-\tfrac{1}{120}\lambda(\lambda^2 - 1)
(\lambda - 2)\theta+\mathcal{O}\left(\theta^3\right).
\end{equation}

We remark that $A_s(\theta)$ and $B_s(\theta)$ are even and odd functions, respectively, which can be proven by induction using (\ref{eq4.11}) and (\ref{eq4.20}) and recalling that $A_{s}(\theta)=\mathrm{A}_{2s}(\theta)$ and $B_{s}(\theta)=\mathrm{B}_{2s+1}(\theta)$.

\subsection{Numerical results}

We would like to compute the relative error of the difference between the exact values to the approximate ones given in this section. However on account of the zeros of the polynomials it is preferable to work with an expression that does not vanish in the interval $0 \leq x = \cos(\theta) \leq 1$. A typical choice for a real-valued oscillatory differentiable function $f(x)$ with simple zeros is its envelope $\{f(x)^2+f'(x)^2\}^{1/2}$.

Now from \cite[Eq. 18.9.19]{NIST:DLMF}	
\begin{equation}
\label{eq4.32}
\frac{d}{dx}C_{n}^{(\lambda)}(x)
=C_{n-1}^{(\lambda+1)}(x),
\end{equation}
and hence this can be approximated for large $n$ by a simple modification of (\ref{eq4.4}). To obtain comparable sized magnitudes of oscillation, at least when the arguments are not too close to $x=1$, we refer to (\ref{eq3.24a}) to scale the derivative by the factor
\begin{equation*}
\label{eq4.33}
\left\{\frac{(\lambda)_{n} }
{2^{\lambda-(1/2)} n!}\right\}
\left\{\frac{2^{\lambda+(1/2)} (n-1)! }
{(\lambda+1)_{n-1}}\right\}
=\frac{2\lambda}{n}.
\end{equation*}

\begin{figure}[h!]
 \centering
 \includegraphics[
 width=0.7\textwidth,keepaspectratio]{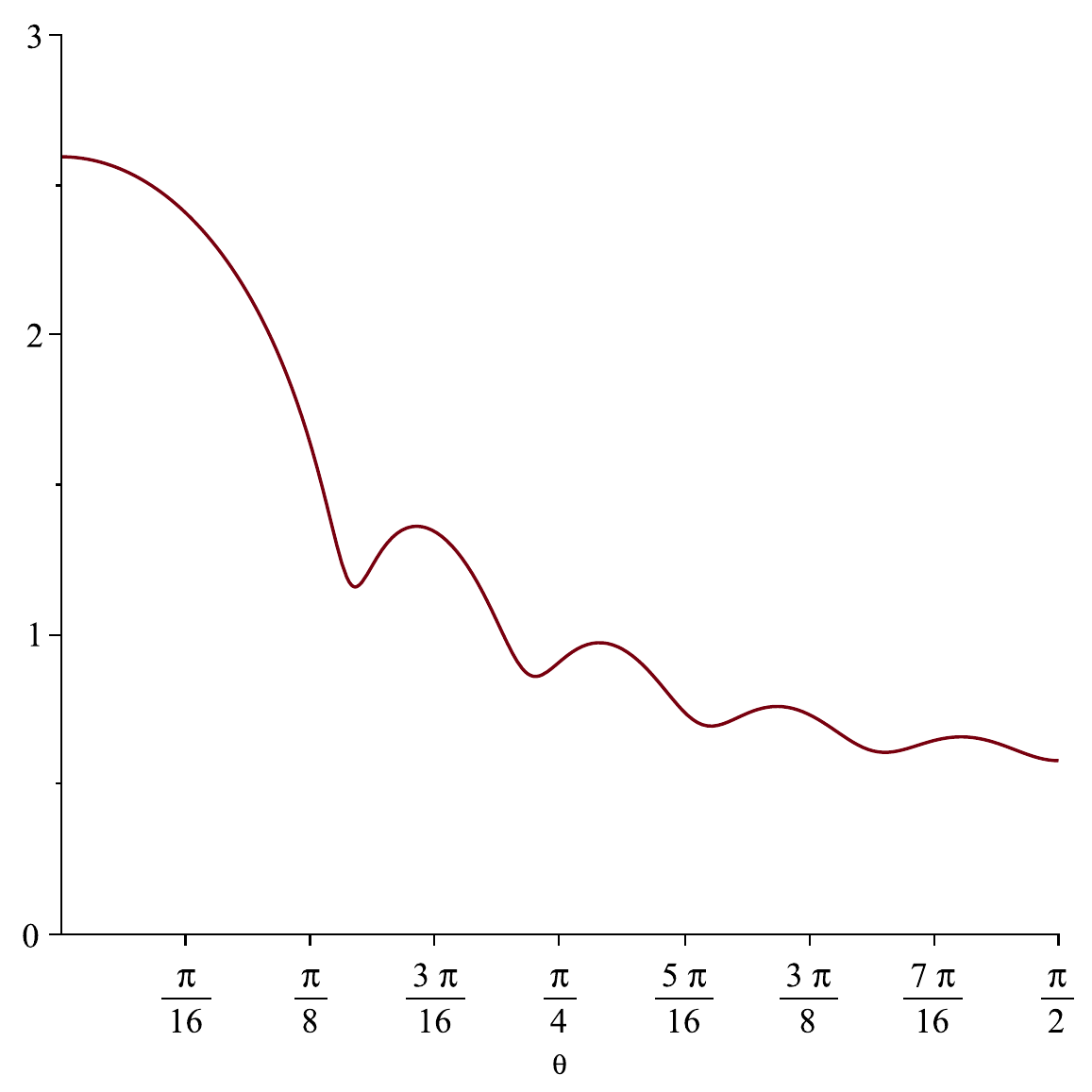}
 \caption{Graph of $\log_{10}(M_{n}^{(\lambda)}(\theta))$ with $n=10$ and $\lambda=1.7$ for $0\leq \theta \leq \pi/2$}
 \label{fig:fig1}
\end{figure}

Thus we define the envelope
\begin{equation}
\label{eq4.34}
M_{n}^{(\lambda)}(\theta)
=\left[\left\{C_{n}^{(\lambda)}(\cos(\theta))\right\}^2
+\left\{\frac{2\lambda}{n}
C_{n-1}^{(\lambda+1)}
(\cos(\theta))\right\}^2\right]^{1/2}.
\end{equation}

Let us choose $n=10$ and $\lambda=1.7$. \cref{fig:fig1} depicts a graph the logarithm (base 10) of $M_{10}^{(1.7)}(\theta)$ for $0\leq \theta \leq \pi/2$, illustrating that this function is fairly slowly varying and of course non-vanishing. We graph its logarithm since the maximum value $M_{10}^{(1.7)}(0)=392.308\cdots$ is quite large relative to the minimum value $M_{10}^{(1.7)}(\pi/2)=3.791\cdots$. Incidentally, this disparity in values is not an issue in the relative error evaluation that follows.

As an approximation to (\ref{eq4.4}) and (\ref{eq4.34}), respectively, we now define
\begin{multline}
\label{eq4.35}
\mathcal{C}_{n,N}^{(\lambda)}(\theta)
=\sqrt{\frac{u\pi}{2}}\,
\frac{ \Gamma\left(\frac{1}{2}u+\frac{1}{2}\lambda\right)
\theta^{1/2}}
{\Gamma\left(\frac{1}{2}u-\frac{1}{2}\lambda+1\right)
\Gamma(\lambda)
\{\sin(\theta) \}^{\lambda}}
\\ \times
\left\{J_{\nu}(u \theta) \sum\limits_{s=0}^{N}
\frac{A_{s}(\theta)}{u^{2s}}
-J_{\nu+1}(u \theta)
\sum\limits_{s=0}^{N-1}
\frac{B_{s}(\theta)}{u^{2s+1}} \right\},
\end{multline}
and
\begin{equation}
\label{eq4.36}
\mathcal{M}_{n,N}^{(\lambda)}(\theta)
=\left[\left\{\mathcal{C}_{n,N}^{(\lambda)}(\theta)
\right\}^2
+\left\{\frac{2\lambda}{n}
\mathcal{C}_{n-1,N}^{(\lambda+1)}(\theta)
\right\}^2\right]^{1/2}.
\end{equation}

The relative error we examine is then given by
\begin{equation}
\label{eq4.37}
\Delta_{n,N}^{(\lambda)}(\theta)
=\left\vert\frac{M_{n}^{(\lambda)}(\theta)
-\mathcal{M}_{n,N}^{(\lambda)}(\theta)}
{M_{n}^{(\lambda)}(\theta)}
\right\vert.
\end{equation}

\begin{figure}[h!]
 \centering
 \includegraphics[
 width=0.7\textwidth,keepaspectratio]{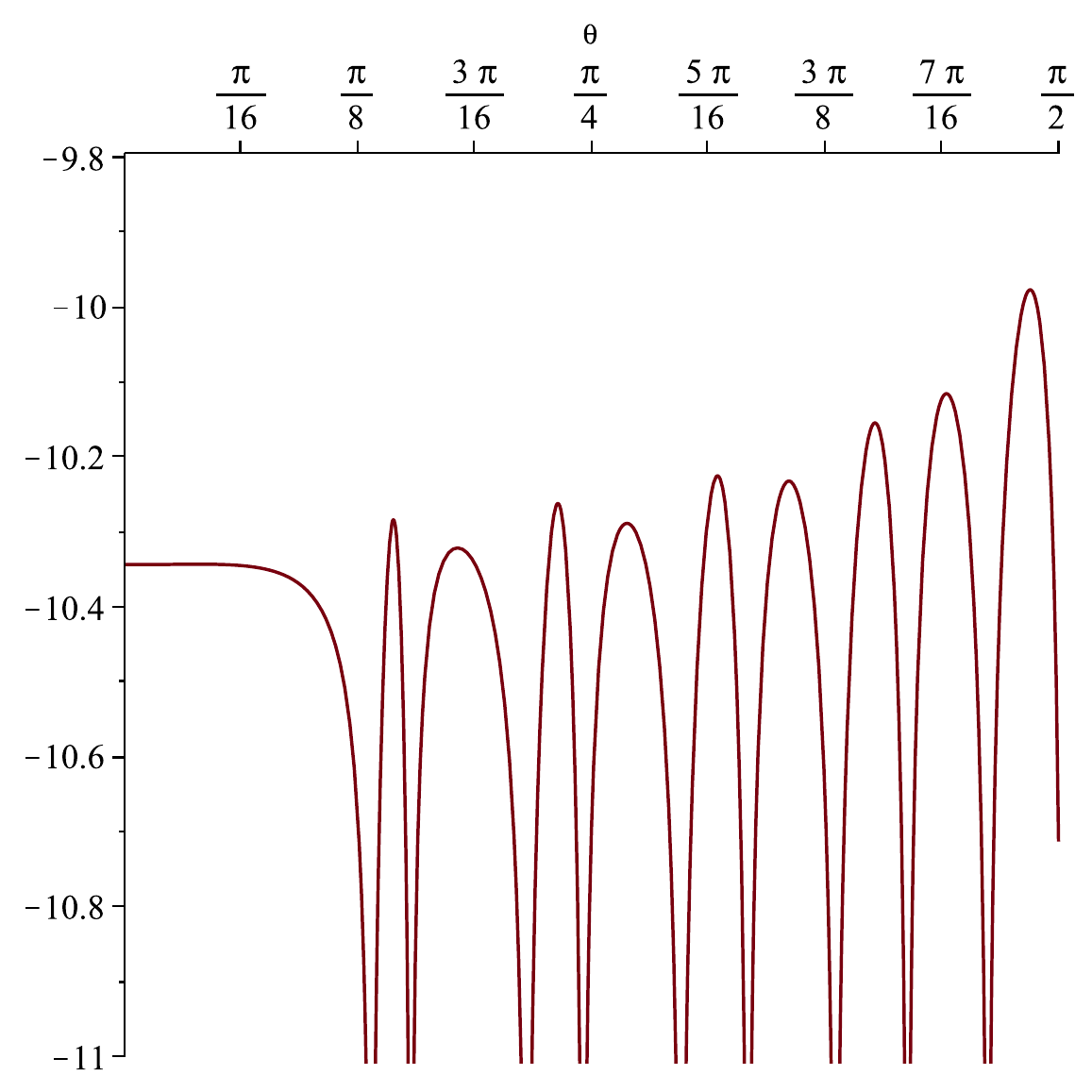}
 \caption{Graph of $\log_{10}(\Delta_{n,N}^{(\lambda)}(\theta))$ with $n=10$, $N=4$ and $\lambda=1.7$ for $0\leq \theta \leq \pi/2$}
 \label{fig:fig2}
\end{figure}

\begin{figure}[h!]
 \centering
 \includegraphics[
 width=0.7\textwidth,keepaspectratio]{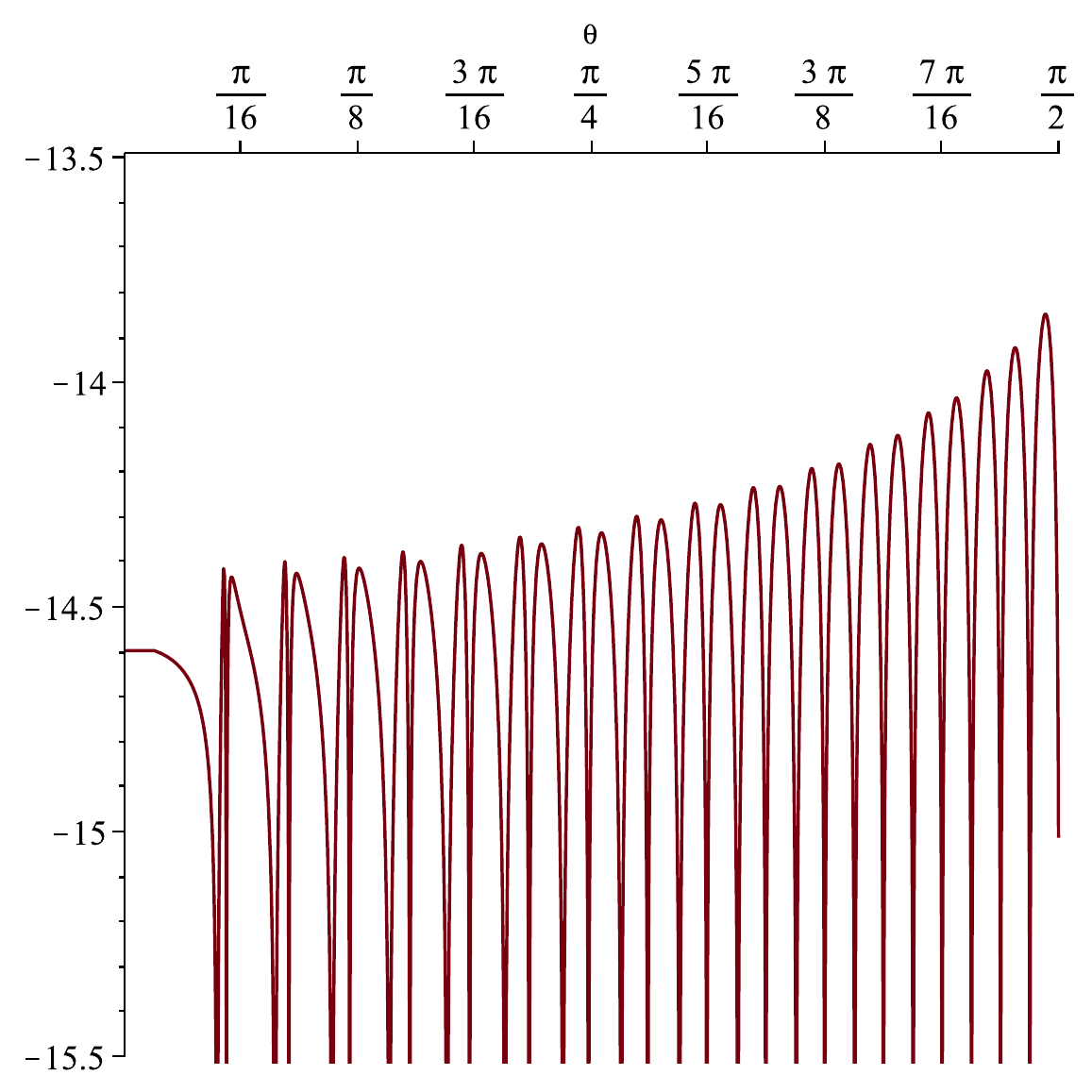}
 \caption{Graph of $\log_{10}(\Delta_{n,N}^{(\lambda)}(\theta))$ with $n=30$, $N=4$ and $\lambda=1.7$ for $0\leq \theta \leq \pi/2$}
 \label{fig:fig3}
\end{figure}

In \cref{fig:fig2} we plot the logarithm base 10 of this function with $n=10$ and $\lambda=1.7$ for $0\leq \theta \leq \pi/2$, taking $N=4$ in our approximation (\ref{eq4.35}) for the Gegenbauer polynomial. As expected, we observe about 10 digits of accuracy, uniformly throughout the interval. In \cref{fig:fig3} a similar plot is given for $n=30$ with the other parameters the same. This time we obtain about 14 digits of accuracy, uniformly throughout the interval. In both graphs the vertical asymptotes occur of course at points where the relative error is identically zero. These points of zero error are seen to be relatively equally distributed, which can presumably be explained by the same being true of the zeros of both the polynomials and their approximants.

\section{Error bounds}
\label{sec5}

We construct error bounds for the expansions of \cref{sec2}, and then use these to obtain error bounds for the expansions in \cref{sec3}. We focus on the first quadrant of the $z$ plane, since the important bounds in this section carry over to the fourth quadrant by Schwarz symmetry when applicable.

\subsection{Error bounds for LG expansions}

We first avoid the pole at $z=1$ by restricting $z$ to $0 \leq \arg(z) \leq \pi/2$ and $|z-1|\geq 1$, as shown by the unshaded region in \cref{fig:fig4}. We also will include the negative imaginary axis, as $z$ crosses the cut $[0,1]$ from above, in our error analysis.

Recalling (\ref{eq2.12}), the corresponding (bounded) region in the $\beta$ plane is illustrated in \cref{fig:fig5}, and in both figures corresponding points are labelled under this transformation. For example, $\mathsf{E}$ and $\mathsf{F}$ are points arbitrarily large in the $z$ plane on their respective axes, and these correspond to points lying on the real axis the $\beta$ plane that are vanishingly close to $\beta =1$. Similar descriptions apply to the points $\mathsf{B}$ and $\mathsf{C}$ with the roles of $z$ and $\beta$ reversed.

We denote $\Gamma$ as the semicircle $|z-1|=1$ in the first quadrant, shown as the boundary $\mathsf{A}\mathsf{G}\mathsf{D}$ in \cref{fig:fig4}, and we let $\Gamma^{\prime}$ be the curve in the $\beta$ plane corresponding to $\Gamma$, which is $\mathsf{A}\mathsf{G}\mathsf{D}$ in \cref{fig:fig5}

\begin{figure}[ht]
 \centering
 \includegraphics[
 width=0.7\textwidth,keepaspectratio]{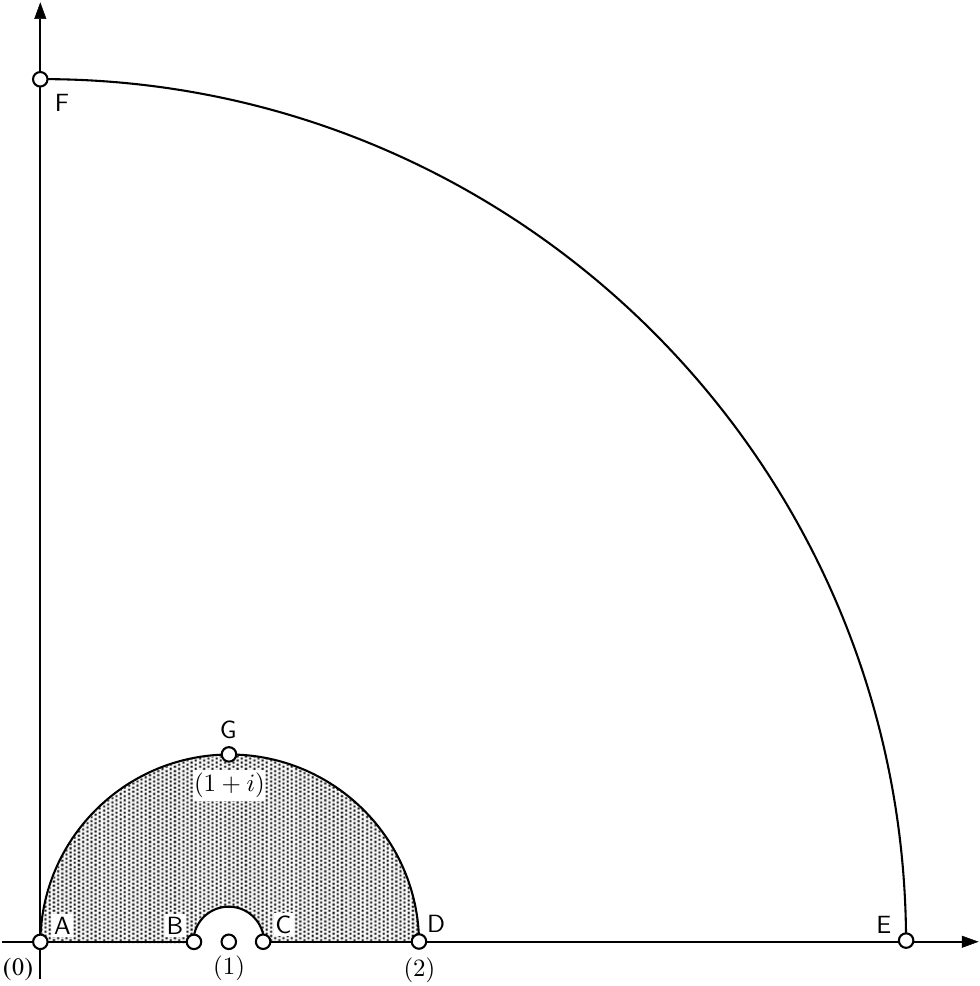}
 \caption{$z$ plane}
 \label{fig:fig4}
\end{figure}

\begin{figure}[ht]
 \centering
 \includegraphics[
 width=0.7\textwidth,keepaspectratio]{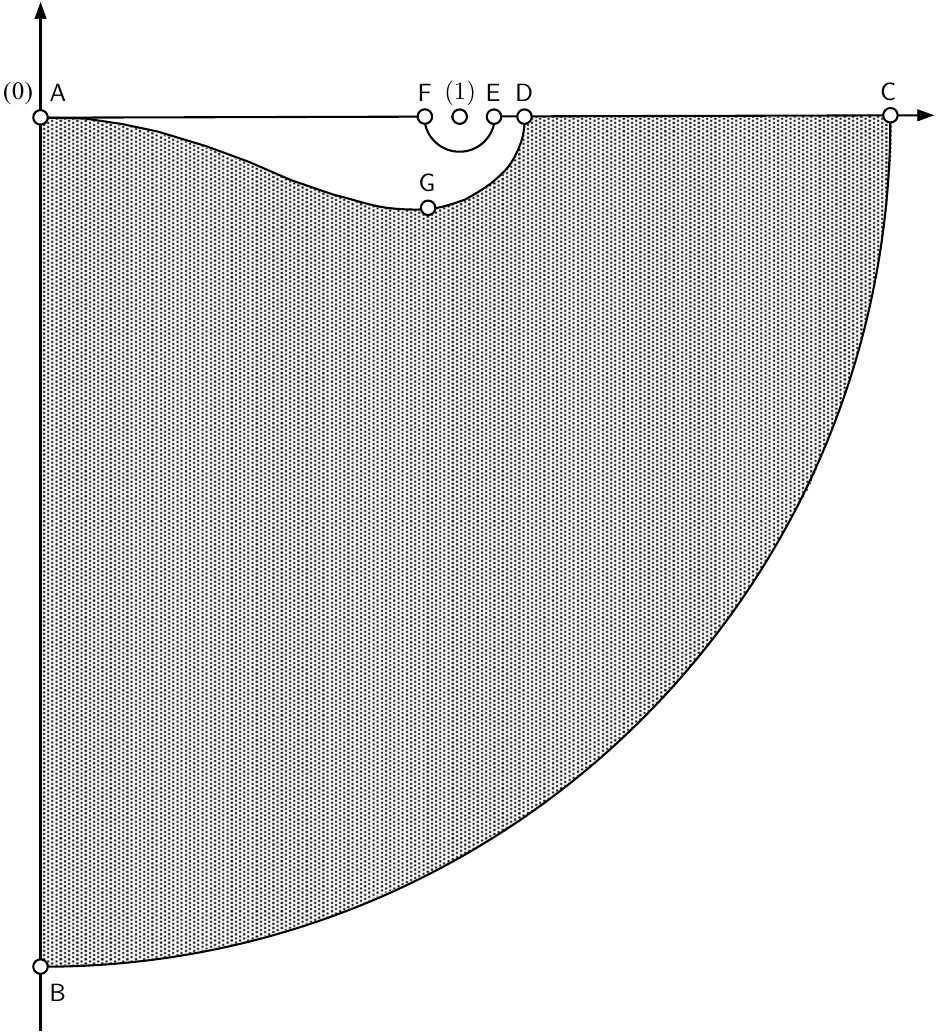}
 \caption{$\beta$ plane}
 \label{fig:fig5}
\end{figure}

Consider now the expansions (\ref{eq2.19}) and (\ref{eq2.21}) for the two solutions that are a numerically satisfactory pair in the region under consideration. We truncate these after $N-1$ terms, for arbitrary $N \geq 2$. Thus
\begin{equation}
\label{eq2.19z}
D_{n}^{(\lambda)}(z) = 
\frac{k_{n}(\nu)}
{\left\{4\left(z^2-1\right)\right\}
^{(2\nu+1)/4}}
\exp \left\{\sum\limits_{s=1}^{N-1}(-1)^{s+1}
\frac{\tilde{E}_{s}(1)}{u^{s}}
\right\}
W_{N,0}(u,z),
\end{equation}
and
\begin{equation}
\label{eq2.22z}
D_{n,-1}^{(\lambda)}(z) =
\frac{i k_{n}(\nu)}
{\left\{4\left(z^2-1\right)\right\}
^{(2\nu+1)/4}}
\exp \left\{\sum\limits_{s=1}^{N-1}(-1)^{s+1}
\frac{\tilde{E}_{s}(1)}{u^{s}}
\right\}
W_{N,-1}(u,z),
\end{equation}
where
\begin{equation}
\label{eq5.1}
W_{N,0}(u,z)=
\exp \left\{ -u\xi
+\sum\limits_{s=1}^{N-1}(-1)^{s}
\frac{\tilde{E}_{s}(\beta)}
{u^{s}}\right\}
\left\{ 1+\eta_{N,0}(u,\beta) \right\},
\end{equation}
and
\begin{equation}
\label{eq5.2}
W_{N,-1}(u,z)=
\exp \left\{u\xi
+\sum\limits_{s=1}^{N-1}
\frac{\tilde{E}_{s}(\beta)}
{u^{s}}\right\}
\left\{ 1+\eta_{N,-1}(u,\beta) \right\}.
\end{equation}

Then for $j=0,-1$ \cite[Eqs. (1.16) - (1.23)]{Dunster:2020:LGE} (with a slightly different notation for the functions and some parameters) provides the following error bounds
\begin{equation}
\label{eq5.3}
\left\vert \eta_{N,j}(u,\beta) \right\vert \leq u^{-N}\omega_{N,j}(u,\beta) 
\exp \left\{ u^{-1}\varpi_{N,j}(u,\beta) +
u^{-N}\omega_{N,j}(u,\beta) \right\}, 
\end{equation}
where, on referring to (\ref{eq2.12a}),
\begin{equation}
\label{eq5.4}
\omega_{N,j}(u,\beta) =2\int_{\beta^{(j)}}^{\beta}
{\left\vert
\frac{\tilde{F}_{N}(b) d b}{1-b^2}
\right\vert }
+\sum\limits_{s=1}^{N-1}\dfrac{1}{{u^{s}}}
\sum\limits_{k=s}^{N-1}
\int_{\beta^{(j)}}^{\beta}{\left\vert 
\frac{\tilde{F}_{k}(b) \tilde{F}_{s+N-k-1}(b) db}
{1-b^2}
\right\vert },
\end{equation}
and 
\begin{equation}
\label{eq5.5}
\varpi_{N,j}(u,\beta) =4\sum\limits_{s=0}^{N-2}
\frac{1}{u^{s}}
\int_{\beta^{(j)}}^{\beta}
\left\vert 
\frac{\tilde{F}_{s+1}(b) db}{1-b^2}
\right\vert.
\end{equation}

Here $\beta^{(0)}=1$ and $\beta^{(-1)}=-1$, corresponding to $z=+\infty$ and $z=-i\infty$, respectively, where in the latter case  $z \rightarrow - i \infty$ across cut $[-1,1]$. Recall $D_{n,-1}^{(\lambda)}(z) \rightarrow 0$ in this case. Note that all integrals in the bounds, as required, converge at $\beta^{(0)}$ and $\beta^{(-1)}$; see (\ref{eq2.14}) - (\ref{eq2.16}) and the sentence that proceeds them. The paths of integration are taken along $\tilde{\mathcal{L}}_{j}$, say, as described next.

From \cite[Thm. 1.1]{Dunster:2020:LGE} $\tilde{\mathcal{L}}_{j}$ must consist of a chain of $R_{2}$ arcs, and as the integration variable moves along this path from one end to the other $\Re(\xi)$ must be monotonic, where in our application $\xi$ is given by (\ref{eq2.13}). Let us describe the simplest choice of paths for the $\beta$ domain depicted in \cref{fig:fig5}, and in doing so demonstrate that all points in the domain can be accessed by such a path, with guarantees that the bounds (\ref{eq5.3}) - (\ref{eq5.5}) hold there (and equivalently $z$ lying in the domain shown in \cref{fig:fig4}).

Firstly, we observe from (\ref{eq2.13}) that $\Re(\xi)$ is monotonic along a path iff $|(\beta+1)/(\beta-1)|$ is monotonic along that path. A straightforward calculation shows that the level curves $|(\beta+1)/(\beta-1)|=\text{constant}$ lying in the fourth quadrant of the $\beta$ plane are the family of semicircles satisfying
\begin{equation*}
\label{eq5.6}
\left\{\Re(\beta)-c\right\}^2+\Im(\beta)^2=c^2-1,
\end{equation*}
where the $c>1$ is any constant. Thus they are centered on the real axis at $\beta=c>1$ with radius $\sqrt{c^2-1}$. Hence they all intercept the positive real axis at two points, namely $\beta =c-\sqrt{c^2-1}$ which is to the left of $\beta=1$, and the other at $\beta=c+\sqrt{c^2-1}$ which is to the right of $\beta=1$.

It appears then that path $\tilde{\mathcal{L}}_{0}$ can simply be taken as the straight line from $\beta=1$ to the point in question, since from the level curves it seems evident that $\Re(\xi)$ is monotonic along such a path. This is readily verified by writing $\beta = x + iy$ ($x,y \in \mathbb{R}$) where $y=m(x-1)$ ($m \neq 0$ being the slope of the line), and in this case
\begin{equation*}
\label{eq5.7}
\left|\frac{\beta+1}{\beta-1}\right|^2=
\frac { \left(m^{2}+1 \right)x^{2}
-2\left(m^{2}-1\right) x
+m^{2}+1}{ \left( x-1 \right) ^{2} 
\left( m^{2}+1\right) }.
\end{equation*}
Hence
\begin{equation}
\label{eq5.8}
\frac{d}{dx}\left|\frac{\beta+1}{\beta-1}\right|^2=
4\frac {x+1}{ (1-x)^{3} \left(m^{2}+1 \right) }.
\end{equation}

Thus if $m>0$ we have $x>1$ and $x$ increasing as $\beta$ goes along $\tilde{\mathcal{L}}_{0}$ from $1$ to the end point, and (\ref{eq5.8}) implies that $|(\beta+1)/(\beta-1)|$ is decreasing, as asserted. Similarly,  if $m<0$ we have $x<1$ and $x$ decreasing as $\beta$ goes along $\tilde{\mathcal{L}}_{0}$ from $1$ to the end point, and (\ref{eq5.8}) again implies that $|(\beta+1)/(\beta-1)|$ is decreasing. 

For the vertical line $x=1$, $0<y<\infty$, the monotonicity requirement immediately follows from
\begin{equation*}
\label{eq5.9}
\left|\frac{\beta+1}{\beta-1}\right|=
\sqrt{1+\frac{4}{y^2}}.
\end{equation*}

Consider now $\tilde{\mathcal{L}}_{-1}$ joining $\beta^{(-1)} =-1 $ to any point in the fourth quadrant of the $\beta$ plane. This path will include the line segment $-1 \leq \beta \leq 0$. If $\beta \in [0,1]$ then $\tilde{\mathcal{L}}_{-1}$ can simply be taken as the real line segment $[-1,\beta]$. Otherwise, our choice of a typical path $\tilde{\mathcal{L}}_{-1}$ is shown in \cref{fig:fig6}, joining $\beta=-1$ with a point in the fourth quadrant. We take it to be the union of a line segment along the real axis and part of the (semicircular) level curve in the fourth quadrant that passes through the point in question. As noted above, all such level curves intersect the positive real axis at a point to the left of $\beta =1$, so such a path exists for all points in the fourth quadrant, and in particular all points in the region under consideration depicted in \cref{fig:fig5}. Note that this includes points lying on the real axis in the interval $(1,\infty)$ since the family of semicircular level curves also all intersect the real axis at a point to the right of $\beta =1$.

\begin{figure}[ht]
 \centering
 \includegraphics[
 width=0.7\textwidth,keepaspectratio]{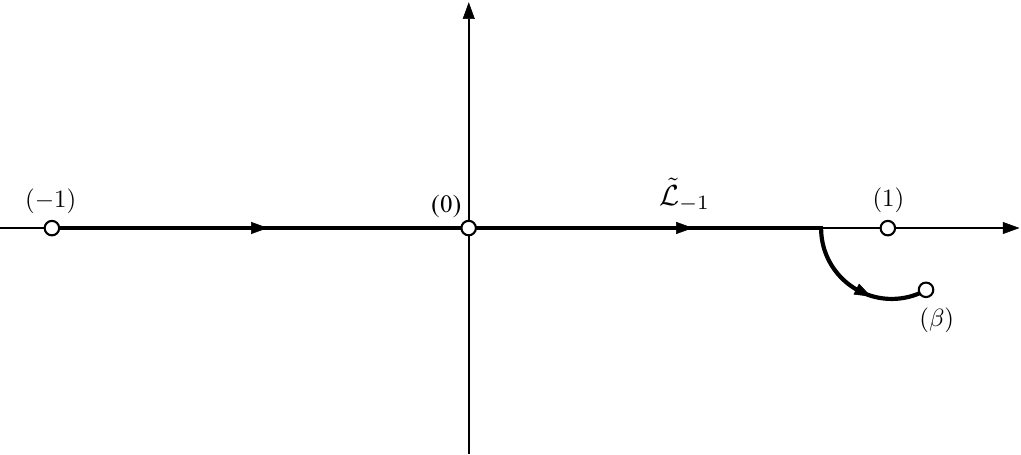}
 \caption{Path $\tilde{\mathcal{L}}_{-1}$ in $\beta$ plane}
 \label{fig:fig6}
\end{figure}

Monotonicity of $|(\beta+1)/(\beta-1)|$ along the level curve part of $\tilde{\mathcal{L}}_{-1}$ is evident by how they are defined, since this quantity is constant on that arc. Along the line segment part monotonicity is also clearly demonstrated by the simple calculation
\begin{equation*}
\label{eq5.10}
\frac{d}{d\beta}\left(\frac{\beta+1}{1-\beta}\right)=
\frac {2}{\left(1-\beta\right)^2 } >0
\quad  (-1\leq \beta<1).
\end{equation*}

Consider next error bounds for the expansions (\ref{eq2.29}) and (\ref{eq2.32a}) for the modified Bessel functions. As $z \rightarrow -i \infty$ across cut $[-1,1]$ then $\sqrt{z^2 - 1} \rightarrow i \infty$, and hence $\xi \sim -\ln(2|z|)+\frac{1}{2}\pi i$. Call this point $\xi^{(-1)}$. In particular $\xi \rightarrow \infty$ above the cut along negative real axis, and hence $K_{\nu}(u\xi e^{-\pi i})$ is the modified Bessel function which is recessive in this case. Similarly $K_{\nu}(u\xi e^{\pi i})$ is recessive for $z \rightarrow i \infty$ across cut $[-1,1]$, with $\xi \sim -\ln(2|z|)-\frac{1}{2}\pi i$ and this point at infinity is labeled $\xi^{(1)}$

Again using \cite{Dunster:2020:LGE}, error bounds are constructed in a similar manner to above, and we arrive at
\begin{equation} 
\label{eq5.11}
K_{\nu}(u\xi) =
\left(\frac{\pi}{2u\xi}\right)^{1/2}
\exp\left\{-u\xi-\sum_{s=1}^{N-1} 
(-1)^s\frac{a_{s}(\nu)}{s (u\xi)^{s}}
\right\}\left\{ 1+\eta_{N,0}^{(K)}(\nu,u,\xi) \right\},
\end{equation}
and
\begin{equation} 
\label{eq5.12}
K_{\nu}\left(u\xi e^{\pm \pi i}\right) =
\mp i \left(\frac{\pi}{2u\xi}\right)^{1/2}
\exp\left\{u\xi-\sum_{s=1}^{N-1} 
\frac{a_{s}(\nu)}{s (u\xi)^{s}}
\right\}\left\{ 1+\eta_{N, \pm 1}^{(K)}(\nu,u,\xi) \right\},
\end{equation}
where for $j=0,\pm 1$
\begin{equation}
\label{eq5.13}
\left\vert \eta_{N,j}^{(K)}(\nu,u,\xi) \right\vert \leq u^{-N}\omega_{N,j}^{(K)}(\nu,u,\xi) 
\exp \left\{ u^{-1}\varpi_{N,j}^{(K)}(\nu,u,\xi) +
u^{-N}\omega_{N,j}^{(K)}(\nu,u,\xi) \right\}, 
\end{equation}
in which 
\begin{equation}
\label{eq5.14}
\omega_{N,j}^{(K)}(\nu,u,\xi) =2\int_{\xi^{(j)}}^{\xi}
{\left\vert
\frac{a_{N}(\nu) d t}{t^{N+1}}
\right\vert }
+\sum\limits_{s=1}^{N-1}\dfrac{1}{{u^{s}}}
\sum\limits_{k=s}^{N-1}
\int_{\xi^{(j)}}^{\xi}{\left\vert 
\frac{a_{k}(\nu)a_{s+N-k-1}(\nu) dt}
{t^{s+N+1}}
\right\vert },
\end{equation}
and 
\begin{equation}
\label{eq5.15}
\varpi_{N,j}^{(K)}(\nu,u,\xi) =4\sum\limits_{s=0}^{N-2}
\frac{1}{u^{s}}
\int_{\xi^{(j)}}^{\xi}
\left\vert 
\frac{a_{s+1}(\nu) dt}{t^{s+2}}
\right\vert.
\end{equation}

Here $\xi^{(0)}=+\infty$ and $\xi^{(\pm 1)}=-\infty \mp \frac{1}{2}\pi i$ as defined above, and the paths are taken so that $\Re(t)$ is monotonic as $t$ moves along the path from $t=\xi^{(j)}$ ($j=0,\pm 1$) to $t=\xi$, and avoids the singularity $\xi = 0$ (which corresponds to $z=1$). The collection of all such points defines the region of validity of (\ref{eq5.11}) - (\ref{eq5.15}) for each $j$.

We only require the two values $j=0,-1$, since these correspond to $K_{\nu}(u\xi)$ and $K_{\nu}(u\xi e^{-\pi i})$, which are the numerically satisfactory pair in the first quadrant of the $z$ plane. It is straightforward to show that the intersection of the regions of validity for their bounds corresponds to a region in the $z$ plane which certainly contains the part of the first quadrant $0 \leq \arg(z) \leq \pi/2$, $|z-1| \geq 1$ shown in \cref{fig:fig4}, as required.

\subsection{Error bounds and computation of the expansions valid at the pole $z=1$}

Here we obtain error bounds for the truncated versions of (\ref{eq3.32}) and (\ref{eq3.33}). The advantage of these over obtaining one bound, such as Olver did for (\ref{eq3.6b}) is twofold. Firstly, the coefficient functions are slowly varying, and as a result the bounds for each of them are much simpler and do not have to mimic the more complicated behavior of the approximants, namely the modified Bessel functions. Secondly, these coefficient functions are the same for all solutions, so different error bounds are not required for these other approximations.

For simplicity we assume $N \geq 3$ is odd, but this can easily be modified in our analysis if $N$ is chosen to be even. From (\ref{eq3.29}), (\ref{eq3.29a}), (\ref{eq2.19z}) - (\ref{eq5.2}), (\ref{eq5.11}) and (\ref{eq5.12}) we then write
\begin{equation}
\label{eq5.16}
A(u,z) = 
\frac{\Gamma(u+\lambda) n!}
{\Gamma(u)\Gamma(u+1)}
\exp \left\{
\sum\limits_{s=1}^{N-1}(-1)^{s+1}
\frac{\tilde{E}_{s}(1)}{u^{s}}
\right\}\left\{A_N(u,z)+\epsilon_{N}^{(A)}(u,z)\right\},
\end{equation}
where
\begin{equation}
\label{eq5.17}
A_N(u,z) = 
\exp \left\{
\sum\limits_{s=1}^{(N-1)/2}
\frac{\tilde{\mathcal{E}}_{2s}(\nu,z)}{u^{2s}}
\right\}
\cosh \left\{ \sum\limits_{s=0}^{(N-3)/2}
\frac{\tilde{\mathcal{E}}_{2s+1}(\nu,z)}{u^{2s+1}}
\right\},
\end{equation}
and
\begin{multline}
\label{eq5.18}
\epsilon_{N}^{(A)}(u,z) =
\frac{1}{2} \exp \left\{\sum\limits_{s=1}^{N-1} 
\frac{\tilde{\mathcal{E}}_{s}(\nu,z)}{u^{s}}
\right\}
\\ \times
\left\{
\eta_{N,0}(u,\beta) +\eta_{N,-1}^{(K)}(\nu+1,u,\xi)
+\eta_{N,0}(u,\beta) \eta_{N,-1}^{(K)}(\nu+1,u,\xi)
\right\}
\\ 
+\frac{1}{2} \exp \left\{
\sum\limits_{s=1}^{N-1} 
(-1)^s \frac{\tilde{\mathcal{E}}_{s}(\nu,z)}{u^{s}}
\right\}
\\ \times
\left\{\eta_{N,-1}(u,\beta) +\eta_{N,0}^{(K)}(\nu+1,u,\xi)
+\eta_{N,-1}(u,\beta) \eta_{N,0}^{(K)}(\nu+1,u,\xi)
\right\}.
\end{multline}

Similarly, using (\ref{eq3.29a}) instead of (\ref{eq3.29}), we write
\begin{equation}
\label{eq5.16a}
B(u,z) = 
\frac{\Gamma(u+\lambda) n!}
{\Gamma(u)\Gamma(u+1)}
\exp \left\{
\sum\limits_{s=1}^{N-1}(-1)^{s+1}
\frac{\tilde{E}_{s}(1)}{u^{s}}
\right\}\left\{B_N(u,z)+\epsilon_{N}^{(B)}(u,z)\right\},
\end{equation}
where
\begin{equation}
\label{eq5.17a}
B_N(u,z) = 
\exp \left\{
\sum\limits_{s=1}^{(N-1)/2}
\frac{\mathcal{E}_{2s}(\nu,z)}{u^{2s}}
\right\}
\sinh \left\{ \sum\limits_{s=0}^{(N-3)/2}
\frac{\mathcal{E}_{2s+1}(\nu,z)}{u^{2s+1}}
\right\},
\end{equation}
and
\begin{multline}
\label{eq5.18a}
\epsilon_{N}^{(B)}(u,z) =
\frac{1}{2} \exp \left\{\sum\limits_{s=1}^{N-1} 
\frac{\mathcal{E}_{s}(\nu,z)}{u^{s}}
\right\}
\\ \times
\left\{
\eta_{N,0}(u,\beta) +\eta_{N,-1}^{(K)}(\nu,u,\xi)
+\eta_{N,0}(u,\beta) \eta_{N,-1}^{(K)}(\nu,u,\xi)
\right\}
\\ 
-\frac{1}{2} \exp \left\{
\sum\limits_{s=1}^{N-1} 
(-1)^s \frac{\mathcal{E}_{s}(\nu,z)}{u^{s}}
\right\}
\\ \times
\left\{\eta_{N,-1}(u,\beta) +\eta_{N,0}^{(K)}(\nu,u,\xi)
+\eta_{N,-1}(u,\beta) \eta_{N,0}^{(K)}(\nu,u,\xi)
\right\}.
\end{multline}

Bounds for the error terms $\epsilon_{N}^{(A)}(u,z)$ and $\epsilon_{N}^{(B)}(u,z)$ that are valid away from the pole $z=1$, and certainly for $|z-1| \geq 1$ in the first quadrant, can be immediately be derived by inserting the LG error bounds (\ref{eq5.3}) - (\ref{eq5.5}), (\ref{eq5.13}) - (\ref{eq5.15}) into (\ref{eq5.18}) and (\ref{eq5.18a}). These bounds immediately carry over to the fourth quadrant by Schwarz symmetry, establishing that the error terms are uniformly $\mathcal{O}(u^{-N})$ as $u \rightarrow \infty$.

In order to get error bounds that are valid for $|z-1|<1$, as well as computing our expansions there, we use Cauchy's integral formula, bearing in mind that $A(u,z)$ and $B(u,z)$ are analytic in this disk. To this end we obtain from (\ref{eq5.16})
\begin{multline}
\label{eq5.19}
A(u,z) = 
\frac{1}{2 \pi i} \oint_{|t-1|=1} \frac{A(u,t) dt}{t-z}
=\frac{\Gamma(u+\lambda) n!}
{\Gamma(u)\Gamma(u+1)}
\exp \left\{
\sum\limits_{s=1}^{N-1}(-1)^{s+1}
\frac{\tilde{E}_{s}(1)}{u^{s}}
\right\}
\\ \times
\left\{\frac{1}{2 \pi i} \oint_{|t-1|=1} 
\frac{A_N(u,t) dt}{t-z} 
+\delta_{N}^{(A)}(u,z)\right\},
\end{multline}
where
\begin{equation}
\label{eq5.20}
\delta_{N}^{(A)}(u,z)
=\frac{1}{2 \pi i} \oint_{|t-1|=1} \frac{
\epsilon_{N}^{(A)}(u,t) dt}{t-z}.
\end{equation}

Note that neither $A_N(u,z)$ nor $\epsilon_{N}^{(A)}(u,z)$ are analytic at $z=1$, but when combined they are, and so in deriving (\ref{eq5.19}) and (\ref{eq5.20}) we have split the original Cauchy integral into the separate ones involving these two meromorphic functions. We cannot then of course infer from (\ref{eq5.20}) that $\delta_{N}^{(A)}(u,z)$ and $\epsilon_{N}^{(A)}(u,z)$ are equal, indeed the former is analytic at $z=1$ (and as noted above the latter is not).

Let us now use (\ref{eq5.20}) to bound this error term for $|z-1|<1$. To do so we use \cite[Lemma 4.1]{Dunster:2021:SEB} with specific values for our disk:
\begin{lemma}
\label{lemK}
For $|z-1|< 1$
\begin{equation}
\label{eqK1}
\oint_{|t-1|=1}\left\vert {\dfrac{dt}{t-z}}
\right\vert =l(z) :=\frac{4 K(k) }{|z-1| +1},
\end{equation}
where
\begin{equation}
\label{defk}
k=\frac{{2}\sqrt{|z-1|}}{|z-1| +1},
\end{equation}
and $K(k)$ is the complete elliptic integral of the first kind defined by (\cite[\S 19.2(ii)]{NIST:DLMF})
\begin{equation}
\label{Kelliptic}
{K}(k) =\int\limits_{0}^{\pi /2}{\dfrac{d\phi }
{\sqrt{1-k^{2}\sin ^{2}(\phi) }}}=\int\limits_{0}^{1}
\dfrac{dt}{\sqrt{\left( 1-t^{2}\right) 
\left( 1-k^{2}t^{2}\right) }}
\quad (0\leq k<1).
\end{equation}
\end{lemma}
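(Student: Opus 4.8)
The plan is to evaluate the line integral by direct parametrization, exploiting a rotational symmetry that collapses the dependence on $z$ to dependence on the single quantity $|z-1|$. First I would parametrize the circle $|t-1|=1$ by $t = 1 + e^{i\phi}$, $0 \le \phi \le 2\pi$, so that $|dt| = d\phi$ and the denominator becomes $|t-z| = |e^{i\phi} - (z-1)|$. Writing $z-1 = re^{i\alpha}$ with $r = |z-1|$, I would factor out $e^{i\alpha}$ and shift the angular variable by $\alpha$; since the integration runs over a full period the phase $\alpha$ drops out entirely, leaving
\[
\oint_{|t-1|=1}\left|\frac{dt}{t-z}\right| = \int_0^{2\pi}\frac{d\psi}{\sqrt{1 - 2r\cos\psi + r^2}}.
\]
This rotational invariance is the key structural point: the integral sees $z$ only through $r = |z-1|$, which is exactly why the answer can be a function of $|z-1|$ alone.

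Next I would reduce the trigonometric integral to standard elliptic form. The half-angle identity $\cos\psi = 1 - 2\sin^2(\psi/2)$ gives $1 - 2r\cos\psi + r^2 = (1-r)^2 + 4r\sin^2(\psi/2)$; folding the integral over $[0,\pi]$ using the symmetry $\psi \mapsto 2\pi - \psi$ and substituting $\phi = \psi/2$ then produces
\[
4\int_0^{\pi/2}\frac{d\phi}{\sqrt{(1-r)^2 + 4r\sin^2\phi}}.
\]
Rewriting the radicand as $(1+r)^2 - 4r\cos^2\phi$ and factoring out $(1+r)^2$ isolates the modulus $k = 2\sqrt{r}/(1+r)$, so the integral becomes $\{4/(1+r)\}\int_0^{\pi/2}\{1 - k^2\cos^2\phi\}^{-1/2}\,d\phi$. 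A final reflection $\phi \mapsto \pi/2 - \phi$ converts $\cos^2\phi$ to $\sin^2\phi$ and identifies the remaining integral as $K(k)$ in the form (\ref{Kelliptic}), yielding $4K(k)/(|z-1|+1) = l(z)$ as claimed.

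The computation is elementary once the rotational reduction has been made, so there is no serious obstacle; the one point needing care is checking that $k$ lies in the admissible range $0 \le k < 1$ on which $K(k)$ is defined. Since $1 + r - 2\sqrt{r} = (1 - \sqrt{r})^2 \ge 0$, one has $k = 2\sqrt{r}/(1+r) \le 1$ with equality only at $r = 1$, so $0 \le k < 1$ holds throughout the open disk $|z-1| < 1$ (with $k \to 1$ and a logarithmic divergence of the integral only as $z$ approaches the bounding circle, consistent with the integrand acquiring a nonintegrable singularity when $z$ meets the contour). I would also remark that the identity is precisely the specialization of \cite[Lemma 4.1]{Dunster:2021:SEB} to the present unit disk, so one could alternatively invoke that lemma directly rather than repeat the elliptic-integral reduction.
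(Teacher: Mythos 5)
Your computation is correct: the parametrization $t=1+e^{i\phi}$, the rotational reduction to an integral depending only on $r=|z-1|$, the half-angle rewriting $(1-r)^2+4r\sin^2(\psi/2)=(1+r)^2-4r\cos^2(\psi/2)$, the identification of the modulus $k=2\sqrt{r}/(1+r)$, and the check that $0\le k<1$ on the open disk are all sound. The paper, however, does not carry out this calculation at all: it simply states that the lemma is \cite[Lemma 4.1]{Dunster:2021:SEB} ``with specific values for our disk,'' i.e.\ the unit disk centered at $1$, so its entire proof is the citation you mention only in passing at the end of your argument. In that sense your route is genuinely different in presentation: you supply a self-contained elementary derivation (which is presumably how the cited lemma is itself established), whereas the paper outsources the work. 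What your version buys is independence from the external reference and an explicit record of where the modulus $k$ and the prefactor $4/(|z-1|+1)$ come from; what the paper's version buys is brevity and consistency with the general-radius statement in the earlier work, of which this is the specialization $\rho=1$, $z_0=1$. Either is acceptable; if you keep the direct proof, the one small stylistic point is that you reuse the symbol $\phi$ both for the original parametrization angle and for the final elliptic-integral variable, which is worth disambiguating.
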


Thus from (\ref{eq5.20}) and (\ref{eqK1})
\begin{equation}
\label{eq5.20a}
\left|\delta_{N}^{(A)}(u,z)\right|
\leq \frac{1}{2 \pi } \sup_{z \in \Gamma}
\left|\epsilon_{N}^{(A)}(u,z)\right| l(z),
\end{equation}
where we recall that $\Gamma$ is the semicircle $|z-1|=1$ lying in the first quadrant. We only need to consider the supremum over this half of the boundary of the disk since $\overline{\epsilon_{N}^{(A)}(u,\bar{z})}=\epsilon_{N}^{(A)}(u,z)$.

We now majorize all terms on the RHS of (\ref{eq5.18}) for $z \in \Gamma$, or equivalently $\beta \in \Gamma'$ and $\xi \in \Gamma''$, where $\Gamma''$ is the contour in the $\xi$ plane corresponding to $\Gamma$. This is shown in \cref{fig:fig7}, and lies in the first quadrant of the $\xi$ plane with end points $\xi = \ln(2+\sqrt{3})$ and $\xi = \pi i/2$.

\begin{figure}[ht]
 \centering
 \includegraphics[
 width=0.7\textwidth,keepaspectratio]{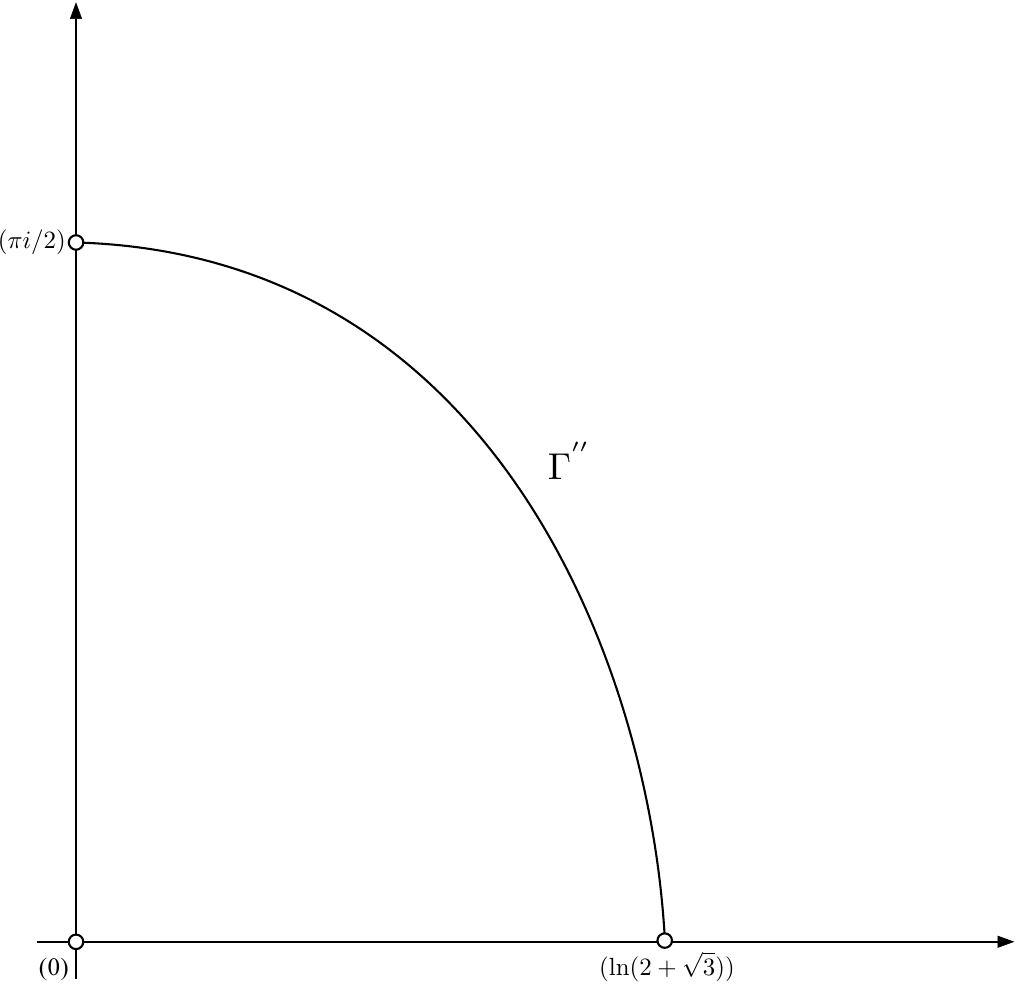}
 \caption{Path $\Gamma''$ in $\xi$ plane}
 \label{fig:fig7}
\end{figure}

Thus, let $M_{N}(\nu,u)=\max\{M_{N}^{\pm}(\nu,u)\}$ where
\begin{equation}
\label{eq5.21}
M_{N}^{\pm}(\nu,u)=\sup_{z \in \Gamma}\left\vert
\exp \left\{
\sum\limits_{s=1}^{N-1} 
(\pm 1)^s \frac{\mathcal{E}_{s}(\nu,z)}{u^{s}}
\right\} \right\vert,
\end{equation}
and $\tilde{M}_{N}(\nu,u)$ similarly defined but with $\mathcal{E}$ replaced by $\tilde{\mathcal{E}}$.

Also define $\eta_{N}(u)=\max\{\eta_{N,0}(u),\eta_{N,-1}(u)\}$ where
\begin{equation}
\label{eq5.22}
\eta_{N,j}(u)
=\sup_{\beta \in \Gamma^{\prime}}\left\vert
\eta_{N,j}(u,\beta) \right\vert
\quad (j=0,-1),
\end{equation}
and similarly let $\eta_{N}^{(K)}(\nu,u)=\max\{\eta_{N,0}^{(K)}(\nu,u),\eta_{N,-1}^{(K)}(\nu,u)\}$, where
\begin{equation}
\label{eq5.23}
\eta_{N,j}^{(K)}(\nu,u)
=\sup_{\xi \in \Gamma''}\left\vert
\eta_{N,j}^{(K)}(\nu,u,\xi) \right\vert
\quad (j=0,-1).
\end{equation}

Note $M_{N}(\nu,u),\tilde{M}_{N}(\nu,u) \rightarrow 1$ and $\eta_{N}(u),\eta_{N}^{(K)}(\nu,u)=\mathcal{O}(u^{-N})$ as $u \rightarrow \infty$, with the latter two being explicitly bounded via (\ref{eq5.3}) - (\ref{eq5.5}) and (\ref{eq5.13}) - (\ref{eq5.15}).

From (\ref{eq5.18}), (\ref{eq5.20a}) and (\ref{eq5.21}) - (\ref{eq5.23}) we arrive at our desired bound
\begin{multline}
\label{eq5.24}
\left\vert \delta_{N}^{(A)}(u,z) \right\vert
\leq \frac{1}{2 \pi } 
\tilde{M}_{N}(\nu,u)
\\ \times
\left\{\eta_{N}(u) +\eta_{N}^{(K)}(\nu+1,u)
+\eta_{N}(u) \eta_{N}^{(K)}(\nu+1,u)
\right\} l(z)
\quad (|z-1|<1).
\end{multline}

Similarly from (\ref{eq5.16a}) and (\ref{eq5.18a})
\begin{multline}
\label{eq5.25}
B(u,z) = 
\frac{\Gamma(u+\lambda) n!}
{\Gamma(u)\Gamma(u+1)}
\exp \left\{
\sum\limits_{s=1}^{N-1}(-1)^{s+1}
\frac{\tilde{E}_{s}(1)}{u^{s}}
\right\}
\\ \times
\left\{\frac{1}{2 \pi i} \oint_{|t-1|=1} 
\frac{B_N(u,t) dt}{t-z} 
+\delta_{N}^{(B)}(u,z)\right\},
\end{multline}
where $B_N(u,z)$ is given by (\ref{eq5.17a}), with the error bound
\begin{multline}
\label{eq5.27}
\left\vert \delta_{N}^{(B)}(u,z) \right\vert
\leq \frac{1}{2 \pi } 
M_{N}(\nu,u)
\\ \times
\left\{\eta_{N}(u) +\eta_{N}^{(K)}(\nu,u)
+\eta_{N}(u) \eta_{N}^{(K)}(\nu,u)
\right\} l(z)
\quad (|z-1|<1).
\end{multline}

\section*{Acknowledgments}
I thank the anonymous referee for helpful comments.

Support was provided from project PID2021-127252NB-I00 funded by \newline MCIN/AEI/10.13039/501100011033/ FEDER, UE.

\bibliographystyle{siamplain}
\bibliography{biblio}

\end{document}